\newcommand{\be}{\begin{equation}}
\newcommand{\ee}{\end{equation}}
\newcommand{\ba}{\begin{array}}
\newcommand{\ea}{\end{array}}
\newcommand{\bea}{\begin{eqnarray}}
\newcommand{\eea}{\end{eqnarray}}
\newcommand{\beas}{\begin{eqnarray*}}
\newcommand{\eeas}{\end{eqnarray*}}
\newtheorem{thm}{Theorem}[section]
\newtheorem{remark}{Remark}[section]
\newtheorem{algorithm}{Algorithm}[section]
\numberwithin{equation}{section}
\def\R{{\mathbb R}}
\newcommand{\p}{\partial}
\newcommand{\cE}{\mathcal E}
\renewcommand{\l}{\left}
\renewcommand{\r}{\right}
\renewcommand{\d}{\mathrm{d}}
\newcommand{\bn}{\mathbf{n}}
\newcommand\cV{{\mathcal V}}
\newcommand{\bX}{\mathbf{X}}
\newcommand{\bY}{\mathbf{Y}}
\begin{document}

\begin{frontmatter}

\title{A second-order in time, BGN-based parametric finite element method for geometric flows of curves}

\author[1]{Wei Jiang}
\address[1]{School of Mathematics and Statistics, Wuhan University, Wuhan 430072, China}
\ead{jiangwei1007@whu.edu.cn}

\author[2]{Chunmei Su\corref{3}}
\ead{sucm@tsinghua.edu.cn}
\author[2]{Ganghui Zhang}
\ead{gh-zhang19@mails.tsinghua.edu.cn}
\address[2]{Yau Mathematical Sciences Center, Tsinghua University, Beijing, 100084, China}
\cortext[3]{Corresponding author.}


\begin{abstract}
Over the last two decades, the field of geometric curve evolutions has attracted significant attention from scientific computing. One of the most popular numerical methods for solving geometric flows is the so-called BGN scheme, which was proposed by Barrett, Garcke, and N\"urnberg (J. Comput. Phys., 222 (2007), pp.~441--467), due to its favorable properties (e.g., its computational efficiency and the good mesh property). However, the BGN scheme is limited to first-order accuracy in time, and how to develop a higher-order numerical scheme is challenging. In this paper, we propose a fully discrete, temporal second-order parametric finite
element method, which integrates with two different mesh regularization techniques,  for solving geometric flows of curves. The scheme is constructed based on the BGN formulation and a semi-implicit Crank-Nicolson leap-frog time stepping discretization as well as a linear finite element approximation in space. More importantly, we point out that the shape metrics, such as manifold distance and Hausdorff distance, instead of function norms, should be employed to measure numerical errors. Extensive numerical experiments demonstrate that the proposed BGN-based scheme is second-order accurate in time in terms of shape metrics. Moreover, by employing the classical BGN scheme as mesh regularization techniques, our proposed second-order schemes exhibit good properties with respect to the mesh distribution.  In addition, an unconditional interlaced energy stability property is obtained for one of the mesh regularization techniques.

\end{abstract}



\begin{keyword}
Parametric finite element method, geometric flow, shape metrics, BGN scheme, high-order in time, energy stability.
\end{keyword}

\end{frontmatter}

\section{Introduction}
Geometric flows, which describe the evolution of curves or surfaces over time based on the principle that the shape changes according to its underlying geometric properties, such as the curvature, have been extensively studied in the fields of computational geometry and geometric analysis. In particular, second-order (e.g., mean curvature flow, which is also called as curve-shortening flow for curve evolution) and fourth-order (e.g., surface diffusion flow) geometric flows  have attracted considerable interest due to their wide-ranging applications in materials science~\cite{Zhao-Jiang-Wang-Bao, Zhao-Jiang-Bao}, image processing~\cite{Aubert06}, multiphase fluids~\cite{Garcke23} and cell biology~\cite{BGN08C}. For more in-depth information, readers can refer to the recent review articles~\cite{BGN20,DDE2005}, and references provided therein.

In this paper,  we focus on three different types of geometric flows of curves: curve-shortening flow (CSF), area-preserving curve-shortening flow (AP-CSF) and surface diffusion flow (SDF). First, assume that $\Gamma(t)$ is a family of  simple closed curves in the two-dimensional plane. We consider that the curve is governed by the three geometric flows, i.e., its velocity is respectively given by
\begin{equation}\label{Geometric equation}
  \cV =\begin{cases}
  	-\kappa\mathbf{n},  &\quad \text{CSF}, \\
  	(-\kappa+\l<\kappa \r>)\mathbf{n},&\quad \text{AP-CSF}, \\
  	(\p_{ss}\kappa) \mathbf{n}, &\quad \text{SDF},
  \end{cases}
\end{equation}
where $\kappa$ is the curvature of the curve, $s$ is the arc-length,  $\l<\kappa \r>:=\int_{\Gamma(t)}\kappa \d s/\int_{\Gamma(t)}1 \d s$ is the average curvature and $\mathbf{n}$ is the outward unit normal to $\Gamma$. Here, we use the sign convention that a unit circle has a positive constant curvature.

By representing the curves $\Gamma(t)$ as a parametrization $\mathbf{X}(\cdot,t):\mathbb{I}\rightarrow \R^2$, where $\mathbb{I}:=\mathbb{R}/\mathbb{Z}$ is the ``periodic" interval $[0, 1]$, Barrett, Garcke and N\"urnberg \cite{BGN07A,BGN20}  creatively reformulated the above equations \eqref{Geometric equation} into the following coupled forms:
\begin{equation}\label{Coupled equation}
	\begin{split}
		\p_t\mathbf{X}\cdot \mathbf{n} &= \begin{cases}
			-\kappa, &\quad \text{CSF},\\
			-\kappa+\l<\kappa \r>,&\quad \text{AP-CSF},\\
			\p_{ss}\kappa, &\quad \text{SDF},
		\end{cases}  \\
	    \kappa \bn &= -\p_{ss}\bX.
	\end{split}
\end{equation}
Based on the above equations and the corresponding weak formulations, a series of numerical schemes (the so-called BGN schemes) were proposed for solving different geometric flows, such as mean curvature flow and surface diffusion~\cite{BGN07A,BGN07B}, Willmore flow~\cite{BGN08C}, anisotropic geometric flow~\cite{Bao-Jiang-Li}, solid-state dewetting~\cite{Zhao-Jiang-Wang-Bao,Zhao-Jiang-Bao} and geometric flow for surface evolution \cite{BGN08B}. Recently, based on the BGN formulation \eqref{Coupled equation}, structure-preserving schemes have been proposed for axisymmetric geometric equations \cite{Bao-Garcke-Nurnberg-Zhao} and surface diffusion \cite{Bao-Zhao, Bao-Jiang-Li}, respectively. In practical simulations, ample numerical results have demonstrated the high performance of
the BGN scheme, due to inheriting the variational structure of the original problem and introducing an appropriate tangential velocity to help mesh points maintain a good distribution. However, for the original BGN scheme, because its formal truncation error is $\mathcal{O}(\tau)$, where $\tau$ is the time step size, the temporal convergence order of the scheme is limited to the first-order. This has been confirmed by extensive numerical experiments~\cite{Bao-Zhao, BGN07A,BGN07B, Zhao-Jiang-Wang-Bao}. Therefore, how to design a temporal high-order scheme which is based on the BGN formulation \eqref{Coupled equation} is challenging and still open. It is also worth noting that rigorous numerical analysis for BGN schemes remains an open problem~\cite{BGN20}.

In this paper, based on the BGN formulation \eqref{Coupled equation}, we propose a novel temporal second-order parametric finite element method for solving geometric flows of curves, i.e., CSF, AP-CSF and SDF. Specifically, to discretize the same continuous-in-time semi-discrete formulation as the classical BGN scheme \cite{BGN07A}, we begin by fixing the unit normal as that on the current curve $\Gamma^m$ and then discretize other terms using the Crank-Nicolson leap-frog scheme~\cite{Hurl16}.
The resulting scheme is a second-order semi-implicit scheme, which only requires solving a system of linear algebraic equations at each time step. Furthermore, the well-posedness 
of the fully discrete scheme can be established under suitable assumption conditions.  Numerical results have demonstrated that the proposed scheme achieves second-order accuracy in time, as measured by the shape metrics, outperforming the classical BGN scheme in terms of accuracy and efficiency.

It is worth mentioning that there exist several temporal higher-order numerical schemes based on other formulations which have been proposed for simulating geometric flows. For the specific case of curve-shortening flow, a Crank-Nicolson-type scheme combined with tangential redistribution \cite{Balazovjech-Mikula} and an adaptive moving mesh method \cite{Mackenzie-Nolan-Rowlatt-Insall} have been developed. Both of the schemes are convergent quadratically in time and fully implicit, requiring to solve a system of nonlinear equations at each time step. Recently, an evolving surface finite element method together with linearly implicit backward difference formulae for time integration for simulating the mean curvature flow has been proposed in \cite{KLL2019,KLL2020}. In comparison to these existing approaches, our newly proposed scheme is based on the BGN formulation \eqref{Coupled equation}, then it inherits the variational structure of the original geometric flows, and has very good property with respect to mesh distribution. The new scheme exhibits comparable computational cost to the classical BGN scheme while surpassing it in terms of accuracy. Furthermore, it can be extended easily to other geometric flows with applications to various fields.

The main reason why we have successfully proposed a temporal high-order, BGN-based parametric finite element method for solving geometric flows lies in the following two key points: (1). we choose an appropriate metric (i.e., shape metrics) to measure numerical errors of the proposed schemes; (2). we use the classical first-order BGN scheme as ``a good partner'' of the proposed scheme to help mesh points maintain a good distribution without sacrificing the accuracy.

How to measure the errors of numerical solutions for geometric flows is an important issue. A natural approach is to use classical Sobolev norms, such as $L^2$-norm, $L^\infty$-norm or $H^1$-norm, which are widely used in the numerical analysis for geometric flows \cite{Dziuk1994,Dziuk1999,KLL2019,KLL2020}. However, when it comes to numerical schemes that involve in tangential movements, these function norms may not be suitable for quantifying the differences between two curves/surfaces. To address this issue, we consider an alternative approach using shape metrics, such as manifold distance (as used in \cite{Bao-Zhao,Zhao-Jiang-Bao2021}) and Hausdorff distance \cite{Bai2011}. These metrics provide a measure of how similar or different two curves/surfaces are in terms of their shape characteristics. Extensive numerical experiments have been conducted, and the results demonstrate that our proposed scheme achieves second-order accuracy when measured using shape metrics.

On the other hand, the quality of mesh distribution is always a major concern when simulating geometric flows using parametric finite element methods. It is important to note that the original flow \eqref{Geometric equation} requires the curve to evolve only in the normal direction, thus the numerical methods based on \eqref{Geometric equation} which prevent tangential movement of mesh points might lead to mesh distortion or clustering during the evolution. To address this issue, various approaches have been proposed in the literature to maintain good mesh quality, e.g., artificial mesh regularization method \cite{Bansch-Morin-Nochetto}, reparametrization by introducing a tangential velocity \cite{Deckelnick-Dziuk,Mikula-Sevcovic,Elliott-Fritz,Kimura,Mikula-Sevcovic2004}. On the contrary, the BGN formulation \eqref{Coupled equation} does not enforce any condition on the tangential velocity, which allows for an intrinsic tangential motion of mesh points, as demonstrated by the standard BGN scheme \cite{BGN07A,BGN07B} constructed based on this formulation \eqref{Coupled equation}. Though the semi-discrete scheme of \eqref{Coupled equation}, where only spatial discretization is performed, results in precise equidistribution of mesh points, our proposed fully discrete second-order BGN-based scheme exhibits oscillations in terms of mesh ratio and other geometric quantities, which may lead to instability in certain situations. To address this issue, we implement two classical first-order BGN schemes as mesh regularization procedures to enhance the quality of the mesh. More specifically, (1). we utilize the classical semi-implicit BGN scheme when poorly distributed polygonal approximations are detected. Extensive numerical experiments have shown that this approach improves the stability of the new scheme and significantly enhances the mesh quality. Importantly, numerous numerical experiments  have also demonstrated that this mesh regularization only occurs infrequently throughout the evolution process, ensuring that the temporal second-order accuracy of the proposed scheme remains uncompromised; (2). after solving the BGN2 scheme at each time step, we employ the fully-implicit BGN scheme for the trivial flow in order to achieve mesh equidistribution. Although this mesh regularization may increase the computational cost, the unaffected temporal second-order accuracy ensures that our newly proposed scheme remains more efficient than classical BGN schemes. More importantly, this mesh regularization allows for the establishment of unconditional energy stability.

The remaining of the paper is organized as follows. In Section 2, taking CSF as an example, we begin by recalling the standard BGN scheme, and then propose a second-order in time, BGN-based parametric finite element method for solving CSF. Two mesh regularization procedures are proposed to ensure the good mesh quality during the evolution. Section 3 is devoted to explaining the importance of using shape metrics, such as manifold distance and Hausdorff distance, to accurately measure the errors of two curves.  We extend the proposed second-order scheme to other geometric flows such as AP-CSF and the fourth-order flow SDF in Section 4. Extensive numerical results are provided to demonstrate the accuracy and efficiency of the proposed schemes in Section 5. Finally, we draw some conclusions in Section 6.

\section{For curve shortening flow (CSF)}

In this section, we propose a parametric finite element method with second-order temporal accuracy for numerically solving the CSF. The same idea can be easily extended to other geometric flows (cf. Section 4). To provide a comprehensive understanding, we first review a classical first-order BGN scheme proposed by Barrett, Garcke and N\"urnberg  \cite{BGN07A,BGN07B,BGN20}.

\subsection{Derivation of the classical BGN scheme}
To begin with, we rewrite the CSF into the following formulation as presented in Eqs.~\eqref{Coupled equation}:
\begin{equation}\label{CSF:Coupled equation}
\begin{split}
	\p_t \mathbf{X}\cdot \mathbf{n} &=-\kappa,\\
  	\kappa \mathbf{n}&=-\p_{ss}\mathbf{X}.
\end{split}
\end{equation}
We introduce the following finite element approximation. Let $\mathbb{I}=[0,1]= \bigcup_{j=1}^N I_j$, $N\ge 3$, be a decomposition of $\mathbb{I}$ into intervals given by the nodes red$\rho_j$, $I_j=[\rho_{j-1},\rho_j]$. Let $h=\max\limits_{1\le j\le N}
|\rho_j-\rho_{j-1}|$ be the maximal length of a grid element. Define the linear finite element space as
\[
V^h:=\{u\in C(\mathbb{I}): u|_{I_j} \,\,\, \mathrm{is\,\,\,linear,\,\,\,} \forall j=1,2,\ldots,N;\quad u(\rho_0)=u(\rho_N) \}\subseteq H^1(\mathbb{I}).
\]
The mass lumped inner product $(\cdot,\cdot)_{\Gamma^h}^h$ over the polygonal curve $\Gamma^h$, which is an approximation of $(\cdot,\cdot)_{\Gamma^h}$ by using the composite trapezoidal rule, is defined as
\[
(u,v)_{\Gamma^h}^h:=\frac{1}{2}\sum_{j=1}^N|\bX^h(\rho_j,t)-\bX^h(\rho_{j-1},t)|\l[(u\cdot v)(\rho_j^-)+(u\cdot v)(\rho_{j-1}^+) \r],
\]
where $u, v$ are two scalar/vector piecewise continuous functions with possible jumps at the nodes $\{\rho_j\}_{j=1}^N$,
and $u(\rho_j^{\pm})=\lim\limits_{\rho\rightarrow \rho_j^{\pm}}u(\rho)$.

Subsequently, the semi-discrete scheme of the formulation \eqref{CSF:Coupled equation} is as follows: given initial polygon $\Gamma^h(0)$ with vertices lying on the initial curve $\Gamma(0)$ clockwise, parametrized by $\bX^h(\cdot,0)\in [V^h]^2$,
find $(\bX^h(\cdot,t),\kappa^h(\cdot,t))\in [V^h]^2\times V^h$ such that
\begin{equation}\label{CSF:Semi-discrete}
	\begin{cases}
			\l(\p_t\mathbf{X}^h\cdot  \mathbf{n}^h,\varphi^h \r)_{\Gamma^h}^h+\l(  \kappa^h,\varphi^h\r)^h_{\Gamma^h}=0,\quad \forall\ \varphi^h\in V^h,\\
			\l(\kappa^h,\mathbf{n}^h\cdot \bm{\omega}^h\r)^h_{\Gamma^h}-\l(\p_s \mathbf{X}^h,\p_s\bm{\omega}^h \r)_{\Gamma^h}=0,\quad \forall\ \bm{\omega}^h\in  [V^h]^2,
		\end{cases}
\end{equation}
where we always integrate over the current curve $\Gamma^h$ described by $\mathbf{X}^h$, the outward unit normal $\bn^h$ is a piecewise constant vector given by
\[\bn^h|_{I_j}=-\frac{\mathbf{h}_j^\perp}{|\mathbf{h}_j|}, \quad \mathbf{h}_j=\bX^h(\rho_j,t)-\bX^h(\rho_{j-1},t),\quad j=1,\ldots, N,\]
with  $\cdot^\perp$ denoting clockwise rotation by $\frac{\pi}{2}$, and
the partial derivative $\p_s$ is defined piecewisely over each side of the polygon
$\p_s f|_{I_j}=\frac{\p_\rho f}{|\p_\rho \mathbf{X}^h|}|_{I_j}=\frac{(\rho_j-\rho_{j-1})\p_\rho f|_{I_j}}{|\mathbf{h}_j|}$. It was shown that the scheme \eqref{CSF:Semi-discrete} will always equidistribute the vertices along $\Gamma^h$ for $t>0$ if they are not locally parallel (see Remark 2.4 in \cite{BGN07A}).

For a full discretization, we fix $\tau>0$ as a uniform time step size for simplicity, and let $\bX^m\in [V^h]^2$ and $\Gamma^m$ be the approximations of $\bX(\cdot,t_m)$ and $\Gamma(t_m)$, respectively, for $m=0,1,2,\ldots$, where $t_m:=m\tau$. We define $\mathbf{h}_j^m:=\bX^m(\rho_j)-\bX^m(\rho_{j-1})$ and assume $|\mathbf{h}_j^m|>0$ for $j=1,\ldots,N$, $\forall\ m>0$. The discrete unit normal vector $\bn^m$, the discrete inner product $(\cdot,\cdot)^h_{\Gamma^m}$ and the discrete operator $\p_s$ are defined similarly as in the semi-discrete case.
Barrett, Garcke and N\"urnberg used a formal first-order approximation \cite{BGN07A,BGN07B} to replace the velocity $\p_t \bX$, $\kappa$ and $\p_s\bX$ by
\begin{equation*}
\begin{split}
     \p_t \bX(\cdot, t_m)&= \frac{\mathbf{X}(\cdot,t_{m+1})-\mathbf{X}(\cdot, t_m)}{\tau}+\mathcal{O}(\tau), \\
	\kappa(\cdot,t_m)&=\kappa(\cdot,t_{m+1})+\mathcal{O}(\tau),  \\
	\p_s\bX(\cdot,t_m)&= \p_s \mathbf{X}(\cdot, t_{m+1})+\mathcal{O}(\tau),
\end{split}
\end{equation*}
and the fully discrete semi-implicit BGN scheme (denoted as BGN1 scheme) reads as:

(\textbf{BGN1, First-order in time BGN scheme for CSF}): For $m\ge 0$, find $\mathbf{X}^{m+1}\in [V^h]^2$ and $\kappa^{m+1}\in V^h$ such that
\begin{equation}\label{CSF:BGN1}
		\begin{cases}
			\l(\frac{\mathbf{X}^{m+1}-\mathbf{X}^m}{\tau},\varphi^h \mathbf{n}^m \r)^h_{\Gamma^m}+\l(  \kappa^{m+1},\varphi^h \r)_{\Gamma^m}^h=0,\quad \forall\ \varphi^h\in V^h,\\	\l(\kappa^{m+1},\mathbf{n}^m\cdot \bm{\omega}^h\r)_{\Gamma^m}^h-\l(\p_s \mathbf{X}^{m+1},\p_s\bm{\omega}^h\r)_{\Gamma^m}=0,\quad \forall\ \bm{\omega}^h\in  [V^h]^2.
		\end{cases}
	\end{equation}
The well-posedness and energy stability were established under some mild conditions. In practice, numerous numerical results show that the BGN1 scheme \eqref{CSF:BGN1} converges quadratically in space \cite{BGN07B} and linearly in time (cf.  Fig. \ref{Fig:CSF_EOC1} in Section \ref{sec:order 2, illu}).

\subsection{A second-order in time, BGN-based scheme}
\label{sec:order 2}
Instead of using the first-order Euler method, we apply the Crank-Nicolson leap-frog time stepping discretization in \eqref{CSF:Semi-discrete} based on the following simple calculation
\be\label{app}
\begin{split}
	\p_t\bX(\cdot,t_m)&= \frac{\mathbf{X}(\cdot, t_{m+1})-\mathbf{X}(\cdot, t_{m-1})}{2\tau}+\mathcal{O}(\tau^2),\\
	\kappa(\cdot,t_m)&= \frac{\kappa(\cdot, t_{m+1})+\kappa(\cdot, t_{m-1})}{2}+\mathcal{O}(\tau^2),\\
	\p_s\bX(\cdot,t_m)&= \frac{\p_s \mathbf{X}(\cdot, t_{m+1})+\p_s \mathbf{X}(\cdot, t_{m-1})}{2}+\mathcal{O}(\tau^2),
\end{split}
\ee
then the corresponding second-order scheme (denoted as BGN2 scheme) is as follows:

(\textbf{BGN2, Second-order in time BGN-based scheme for CSF}):~ For $\bX^0 \in [V^h]^2$, $\kappa^0\in  V^h$ and $(\bX^1,\kappa^1)\in [V^h]^2\times V^h$ which are the appropriate approximations at the time levels $t_0=0$ and $t_1=\tau$, respectively, find $\mathbf{X}^{m+1}\in [V^h]^2$ and $\kappa^{m+1}\in V^h$ for $m\ge 1$ such that
\begin{equation}\label{CSF:BGN2}
		\begin{cases}
	\l(\frac{\mathbf{X}^{m+1}-\mathbf{X}^{m-1}}{2\tau},\varphi^h \mathbf{n}^m \r)^h_{\Gamma^m}+\l(  \frac{\kappa^{m+1}+\kappa^{m-1}}{2},\varphi^h \r)_{\Gamma^m}^h=0,\\
	\vspace{-3mm}\\		\l(\frac{\kappa^{m+1}+\kappa^{m-1}}{2},\mathbf{n}^m\cdot \bm{\omega}^h\r)_{\Gamma^m}^h-\l(\frac{\p_s \mathbf{X}^{m+1}+\p_s \mathbf{X}^{m-1}}{2},\p_s\bm{\omega}^h\r)_{\Gamma^m}=0,
		\end{cases}
	\end{equation}
for all $(\varphi^h, \bm{\omega}^h)\in V^h\times [V^h]^2$.
The scheme \eqref{CSF:BGN2} is semi-implicit and the computational cost is comparable to that of the BGN1 scheme \eqref{CSF:BGN1}. Moreover, as a temporal discretization of the semi-discrete version \eqref{CSF:Semi-discrete}, it can be easily derived from \eqref{app} that the truncation error is of order $\mathcal{O}(\tau^2)$.

\begin{remark}
To begin the BGN2 scheme \eqref{CSF:BGN2}, we need to first prepare the data $\kappa^0$ and
 $(\bX^1,\kappa^1)$.  In practical simulations, this can be easily achieved without sacrificing the accuracy of the scheme by utilizing the standard BGN1 scheme \eqref{CSF:BGN1} to get $(\bX^1,\kappa^1)$, and the following formula of discrete curvature was proposed in \cite[Page 461]{BGN07A} to prepare $\kappa^0$ (note the the sign convention of the curvature is opposite to \cite{BGN07A})
 \begin{equation}\label{kappa formula}
 	\kappa^0=(N_0^\top N_0)^{-1}N_0^\top A_0 \mathbf{X}^0,
 \end{equation}
where $N_0$ is a $2N\times N$ matrix, $\mathbf{X}^0$ is a $2N\times 1$ vector and $A_0$ is a $2N\times 2N$ matrix given by
\begin{align*}
&N_0 = \begin{pmatrix}
	(\varphi_i,(\bn^0)^{[1]} \varphi_j)_{\Gamma^0}^h\\
	(\varphi_i,(\bn^0)^{[2]} \varphi_j)_{\Gamma^0}^h
\end{pmatrix}, \quad \mathbf{X}^0=\begin{pmatrix}
	\mathbf{x}^0\\
	\mathbf{y}^0
\end{pmatrix},\\
&A_0=\begin{pmatrix}
	(\p_s\varphi_i,\p_s\varphi_j)_{\Gamma^0} & 0\\
	0 & (\p_s\varphi_i,\p_s\varphi_j)_{\Gamma^0}
\end{pmatrix},
\end{align*}
where $\varphi_i, 1\le i\le N$ are the standard Lagrange basis over $\mathbb{I}$, and  $\mathbf{a}^{[1]},\mathbf{a}^{[2]}$ are the first and second component of vector $\mathbf{a}\in \R^2$, and $\mathbf{x}_j^0=(\bX^0)^{[1]}(\rho_j)$, $\mathbf{y}_j^0=(\bX^0)^{[2]}(\rho_j)$ for $j=1,\ldots, N$. Note that this formula can be derived by solving the finite element approximation of the equation $\kappa \bn=-\p_{ss}\bX$ and using the least square method. We can summarize the process as Algorithm \ref{CSF:BGN initial data 1}, which outlines the steps to prepare the required data $\kappa^0$ and $(\bX^1,\kappa^1)$. Once we have obtained these data, we can directly apply the BGN2 scheme \eqref{CSF:BGN2} to calculate $(\bX^m,\kappa^m)$, for $m\ge 2$.
 \end{remark}

\begin{algorithm}
\textbf{(Preparation for the initial data of BGN2 for CSF)}
\label{CSF:BGN initial data 1}

$\bm{Step~0.}$ Given the initial curve $\Gamma(0)$, the number of grid points $N$ and the time step size $\tau$. We choose the polygon $\Gamma^0$ with $N$ vertices lying on $\Gamma(0)$ such that $\Gamma^0$ is (almost) equidistributed, i.e., each side of the polygon is (nearly) equal in length. We parameterize $\Gamma^0$ with $\bX^0\in [V^h]^2$ and the grid points $\rho_j$ can be determined correspondingly.

$\bm{Step~1.}$ Using $\bX^0$ as the input, we compute $\kappa^0$ using the discrete curvature formula \eqref{kappa formula}.

 $\bm{Step~2.}$ Using $\bX^0$ as the input, we obtain $(\bX^1,\kappa^1)$ by solving the BGN1 scheme \eqref{CSF:BGN1} for one time step.

\end{algorithm}

 \begin{remark}\label{iniitalp}
 When dealing with an initial curve which is not regular, an alternative approach for initialization is to solve the BGN1 scheme twice and start the BGN2 scheme from $m=2$. Specifically, for given $\bX^0$, we can compute $(\bX^1, \kappa^1)$
 and $(\bX^2, \kappa^2)$, which are the appropriate approximations at time levels $t_1=\tau$ and $t_2=2\tau$, by solving the BGN1 scheme \eqref{CSF:BGN1} twice. These approximations can be used as initial values to implement the BGN2 scheme \eqref{CSF:BGN1} for $m\ge 2$. For the superiority of this approach, see Fig.~\ref{Fig:flower_geo_MRandnoMRb} in Section 5.3.	
 \end{remark}

Similar to the BGN1 scheme \eqref{CSF:BGN1}, we can show the well-posedness of the BGN2 scheme \eqref{CSF:BGN2} under some mild conditions as follows.
	
\smallbreak
	
\begin{thm}[Well-posedness]\label{CSF:Well-posedness} For $m\ge 0$,  we assume that the following two conditions are satisfied:
	\begin{enumerate}
		\item[(1)] There exist at least two vectors in $\{\mathbf{h}_j^m\}_{j=1}^{N}$ which are not parallel, i.e.,
		\[
		\mathrm{dim}\l( \mathrm{Span}\l\{\mathbf{h}_j^m \r\}_{j=1}^{N}\r)=2.
		\]
		\item[(2)] No degenerate elements exist on $\Gamma^m$, i.e.,
		\[
		\min_{1\le j\le N}| \mathbf{h}_j^m|>0.
		\]
	\end{enumerate}
	Then the full discretization \eqref{CSF:BGN2} is well-posed, i.e., there exists a unique solution $(\mathbf{X}^{m+1},\kappa^{m+1})\in [V^h]^2\times V^h$ of \eqref{CSF:BGN2}.
\end{thm}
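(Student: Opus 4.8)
Since \eqref{CSF:BGN2} is a square linear system for the unknowns $(\mathbf{X}^{m+1},\kappa^{m+1})\in [V^h]^2\times V^h$, it suffices to prove uniqueness, i.e.\ that the only solution of the associated homogeneous system is the trivial one. The plan is to set $\mathbf{X}^{m-1}=\kappa^{m-1}=0$ in \eqref{CSF:BGN2}, rename the unknowns $(\mathbf{X},\kappa):=(\mathbf{X}^{m+1},\kappa^{m+1})$, and show $\mathbf{X}=0$, $\kappa=0$. The homogeneous equations read
\begin{equation*}
\begin{cases}
\l(\tfrac{1}{2\tau}\mathbf{X},\varphi^h\mathbf{n}^m\r)^h_{\Gamma^m}+\l(\tfrac12\kappa,\varphi^h\r)^h_{\Gamma^m}=0, &\forall\,\varphi^h\in V^h,\\
\l(\tfrac12\kappa,\mathbf{n}^m\cdot\bm{\omega}^h\r)^h_{\Gamma^m}-\l(\tfrac12\p_s\mathbf{X},\p_s\bm{\omega}^h\r)_{\Gamma^m}=0, &\forall\,\bm{\omega}^h\in[V^h]^2.
\end{cases}
\end{equation*}
The standard BGN energy argument then applies: take $\varphi^h=\kappa$ in the first equation and $\bm{\omega}^h=\mathbf{X}$ in the second, and add. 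The cross terms $\l(\tfrac12\kappa,\mathbf{n}^m\cdot\mathbf{X}\r)^h_{\Gamma^m}$ and $\l(\tfrac{1}{2\tau}\mathbf{X},\kappa\,\mathbf{n}^m\r)^h_{\Gamma^m}$ cancel up to the factor $\tfrac{1}{\tau}$ — more precisely, multiply the first equation by $\tau$ before adding so the curvature–normal pairings cancel exactly — leaving
\[
\tfrac12\l(\kappa,\kappa\r)^h_{\Gamma^m}+\tfrac{\tau}{2}\l(\p_s\mathbf{X},\p_s\mathbf{X}\r)_{\Gamma^m}=0.
\]
Both terms are nonnegative, hence $\l(\kappa,\kappa\r)^h_{\Gamma^m}=0$ and $\l(\p_s\mathbf{X},\p_s\mathbf{X}\r)_{\Gamma^m}=0$.

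From $\l(\kappa,\kappa\r)^h_{\Gamma^m}=0$ and condition (2) (all $|\mathbf{h}_j^m|>0$, so the mass-lumped inner product is positive definite on $V^h$) we get $\kappa\equiv 0$. From $\l(\p_s\mathbf{X},\p_s\mathbf{X}\r)_{\Gamma^m}=0$ we get $\p_s\mathbf{X}=0$ on each side, so $\mathbf{X}$ is a constant vector $\mathbf{X}\equiv\mathbf{c}\in\R^2$. It remains to show $\mathbf{c}=0$. Plugging $\kappa=0$ and $\mathbf{X}\equiv\mathbf{c}$ back into the first homogeneous equation gives $\l(\mathbf{c},\varphi^h\mathbf{n}^m\r)^h_{\Gamma^m}=0$ for all $\varphi^h\in V^h$; choosing $\varphi^h=\varphi_i$ the $i$-th hat function and writing out the mass-lumped sum yields, for every vertex $i$, a relation of the form $\mathbf{c}\cdot\big(|\mathbf{h}_i^m|\,\mathbf{n}^m|_{I_i}+|\mathbf{h}_{i+1}^m|\,\mathbf{n}^m|_{I_{i+1}}\big)=0$. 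This is exactly the place where condition (1) enters: since not all edge vectors $\mathbf{h}_j^m$ are parallel, the vectors $\mathbf{n}^m|_{I_j}$ span $\R^2$, so the collection of these vertex relations forces $\mathbf{c}\perp\R^2$, i.e.\ $\mathbf{c}=0$. Thus $\mathbf{X}=0$ and $\kappa=0$, proving uniqueness and hence well-posedness.

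The main obstacle is the final step, showing $\mathbf{c}=0$: the energy identity only controls $\p_s\mathbf{X}$, so it kills all information about the (constant) translational component of $\mathbf{X}$, and one must return to the first equation and exploit the non-parallelism hypothesis (1) to rule out a nonzero translation — this is the analogue of the standard observation that the BGN bilinear form has a kernel consisting of translations unless the polygon has two non-parallel edges. A careful bookkeeping of the mass-lumped coefficients at each vertex is needed here; I would phrase it by assembling the $2N\times N$-type normal matrix $N^m$ (as in \eqref{kappa formula}) and noting that condition (1) guarantees $N^m$ has full rank $2$ in the appropriate sense, so $\mathbf{c}^\top N^m = 0$ implies $\mathbf{c}=0$. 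A minor secondary point is making the cancellation of cross terms precise with the $1/\tau$ versus $1/(2\tau)$ factors, which is handled simply by scaling the first test-function choice by $\tau$. Everything else is routine linear algebra over the finite-dimensional space $[V^h]^2\times V^h$.
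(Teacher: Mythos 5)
Your proposal is correct and follows essentially the same route as the paper: reduce to the homogeneous square linear system and observe that its matrix coincides with that of the BGN1 scheme, at which point the paper simply cites \cite[Theorem 2.9]{BGN07B}, whereas you write out that cited energy/kernel argument in full. The only point needing care is the final step, which you flag yourself: the vertex relations read $\mathbf{c}\cdot\bigl(|\mathbf{h}_i^m|\,\mathbf{n}^m|_{I_i}+|\mathbf{h}_{i+1}^m|\,\mathbf{n}^m|_{I_{i+1}}\bigr)=0$, i.e.\ $\mathbf{c}^\perp\cdot(\mathbf{h}_i^m+\mathbf{h}_{i+1}^m)=0$, so one cannot conclude directly from ``the normals span $\R^2$'' but must run the alternating-sign telescoping argument on $\mathbf{c}^\perp\cdot\mathbf{h}_j^m$ together with condition (1), exactly as in the cited BGN proof.
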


\begin{proof}
	It suffices to prove the following algebraic system for $(\bX,\kappa)\in [V^h]^2\times V^h$ has only zero solution,
	\begin{equation*}
		\begin{cases}
			\l(\frac{\mathbf{X}}{\tau},\varphi^h \mathbf{n}^m \r)^h_{\Gamma^m}+\l( \kappa ,\varphi^h \r)_{\Gamma^m}^h=0,\quad \forall\ \varphi^h\in V^h,\\
   \l(\kappa,\mathbf{n}^m\cdot \bm{\omega}^h\r)_{\Gamma^m}^h-\l(\p_s \mathbf{X},\p_s\bm{\omega}^h\r)_{\Gamma^m}=0,\quad \forall\ \bm{\omega}^h\in  [V^h]^2.
		\end{cases}
	\end{equation*}
	Indeed, the stiffness matrix is exactly the same as the standard BGN1 scheme \eqref{CSF:BGN1} and thus the same argument in \cite[Theorem 2.9]{BGN07B} yields the conclusion under the assumptions (1) and (2).
\end{proof}

	\smallbreak

\smallbreak

\subsection{Mesh regularization by semi-implicit BGN1 scheme}
As was mentioned earlier, the semi-discrete scheme \eqref{CSF:Semi-discrete} possesses the mesh equidistribution property \cite[Theorem 79]{BGN20}. In practice, the fully-discrete BGN1 scheme \eqref{CSF:BGN1} can maintain the asymptotic long-time mesh equidistribution property. However, the BGN2 scheme~\eqref{CSF:BGN2} may have oscillating mesh ratio due to the structure of two-step method, which can potentially amplify the mesh ratio and cause mesh distortion or clustering during the evolution, especially for some initial curves which are not so regular, e.g., a `flower'  curve (see the second row of Fig. \ref{Fig:flower_evo_MRandnoMR}). Therefore, a mesh regularization procedure is necessary in real simulations to help the mesh maintain a good distribution property during the evolution, when the mesh ratio exceeds a given threshold value. Inspired by the good mesh distribution property of the BGN1 scheme, we utilize the BGN1 scheme as the mesh regularization technique. In the following, we denote $n_{\rm{MR}}$ as the threshold value chosen initially. If the mesh ratio $\Psi^{m}>n_{\rm{MR}}$, then we use the mesh regularization procedure to improve the mesh distribution.  We present a summary of the complete algorithm of BGN2 scheme for solving the CSF in Algorithm \ref{Full algorithm}.

\begin{algorithm}
\textbf{(\textbf{BGN2 scheme for CSF})}
\label{Full algorithm}

$\bm{Step~0.}$  Given the initial curve $\Gamma(0)$, and $N,T,n_{\rm{MR}}$, $\tau$, compute $\bX^0$ as in \emph{Step~0} in Algorithm \ref{CSF:BGN initial data 1}.

$\bm{Step~1.}$ Using $\bX^0$ as the input, we compute $\kappa^0$ using the discrete curvature formula \eqref{kappa formula} and solve $(\bX^1, \kappa^1)$ via the BGN1 scheme \eqref{CSF:BGN1}. Set $m=1$.

$\bm{Step~2.}$ Calculate the mesh ratio $\Psi^m$ of $\bX^m$, $m\ge 1$.

$\bm{Step~3.}$ If the mesh ratio $\Psi^m>n_{\rm{MR}}$, then replace $(\bX^m,\kappa^m)$ with the solution of the BGN1 scheme \eqref{CSF:BGN1} with $\bX^{m-1}$ as the input by one run; otherwise, skip this step.

$\bm{Step~4.}$ Use the BGN2 scheme \eqref{CSF:BGN2} to obtain $(\bX^{m+1},\kappa^{m+1})$.

$\bm{Step~5.}$ Update $m=m+1$. If $m< T/\tau$, then go back to $\textbf{Step~2}$; otherwise, stop the algorithm and output the data.

\end{algorithm}

\smallskip

As shown in $\emph{\textbf{Step~3}}$  of Algorithm \ref{Full algorithm},  if the mesh ratio $\Psi^{m}>n_{\rm{MR}}$, we replace $(\bX^m,\kappa^m)$ with the solution of the BGN1 scheme~\eqref{CSF:BGN1} with $\bX^{m-1}$ as the input by one run,
to help us realize the mesh regularization. Extensive numerical experiments suggest that the mesh regularization procedure is very effective, and the mesh ratio decreases immediately to a small value after this procedure (cf. Fig.~\ref{Fig:tube_evo_and_Geo}(d) in Section 5). The BGN2 scheme with the aid of the BGN1 scheme as the mesh regularization is very efficient and
stable in practical simulations. The reason comes from that the BGN1 scheme \eqref{CSF:BGN1} can intrinsically lead to a good mesh distribution property, which was explained in \cite{BGN07A,BGN20}, but a more convincing explanation needs further rigorous numerical analysis for the scheme.

One concern that may arise is whether the BGN2 scheme with necessary mesh regularization can still achieve second-order accuracy, considering that the BGN1 scheme is only first-order accurate. It is important to note that for certain smooth initial curves, such as elliptic curves, the mesh regularization procedure is never required during the evolution. In such cases, the numerical evolution remains remarkably stable and the mesh ratio remains bounded.  While for certain special initial curves, like a `flower'  curve or a `tube' curve, the mesh regularization procedure may be needed only a few times  (cf. Section \ref{sec:long time, illu}). Nevertheless, this does not compromise the temporal second-order accuracy of the BGN2 scheme~\eqref{CSF:BGN2}.

\subsection{Mesh regularization by implicit equi-BGN1 scheme for trivial flow}
In the following, we recall a fully-implicit scheme for CSF \cite{BGN11} which intrinsically equidistributes the vertices along the curve at each time step.

(\textbf{equi-BGN1, First-order in time equidistribution BGN scheme for CSF}): For $m\ge 0$, find $\mathbf{X}^{m+1}\in [V^h]^2$ and $\kappa^{m+1}\in V^h$ such that
\begin{equation}\label{CSF:equi-BGN1}
\begin{cases}
\l(\frac{\mathbf{X}^{m+1}-\mathbf{X}^m}{\tau},\varphi^h \mathbf{n}^{m+1} \r)^h_{\Gamma^{m+1}}+\l(\kappa^{m+1},\varphi^h \r)_{\Gamma^{m+1}}^h=0,\,\,\,\, \forall\,\varphi^h\in V^h,\\	
\l(\kappa^{m+1},\mathbf{n}^{m+1}\cdot \bm{\omega}^h\r)_{\Gamma^{m+1}}^h-\l(\p_s \mathbf{X}^{m+1}, \p_s\bm{\omega}^h\r)_{\Gamma^{m+1}}=0,\,\,\,\, \forall\,\bm{\omega}^h\in  [V^h]^2.
		\end{cases}
	\end{equation}
It has been shown in \cite{BGN11} that
	\begin{equation}\label{Lemma:property1}
		|\mathbf{h}_{j}^{m+1}|=|\mathbf{h}_{j-1}^{m+1}|,\quad \mathrm{if}\quad \mathbf{h}_{j}^{m+1}\nparallel \mathbf{h}_{j-1}^{m+1},\qquad j=1,\ldots,N.
	\end{equation}
Moreover, the stability estimate holds
	\begin{equation}\label{Lemma:property2}
		L^{m+1}+\tau\l(\kappa^{m+1},\kappa^{m+1}\r)_{\Gamma^{m+1}}^h\le L^{m},
	\end{equation}
where $L^m$ represents the length of the polygon $\Gamma^{m}$.

Inspired by the equidistribution property of the fully implicit scheme \eqref{CSF:equi-BGN1}, we propose to implement the mesh regularization using the equi-BGN1 scheme for the trivial flow
\[
	\cV=0,
\]
which can distribute mesh points equally and retain the shape of the curve in the continuous level. More specifically, with $\mathbf{X}^{m}\in [V^h]^2$, find $\widetilde{\mathbf{X}}^{m}\in [V^h]^2$ and $\widetilde{\kappa}^{m}\in V^h$ such that
\begin{equation}\label{tri:equi-BGN1}
\begin{cases}
\l(\widetilde{\mathbf{X}}^{m}-\mathbf{X}^{m}, \varphi^h \widetilde{\mathbf{n}}^{m} \r)^h_{\widetilde{\Gamma}^{m}}=0,\quad \forall\,\varphi^h\in V^h,\\	
\l(\widetilde{\kappa}^{m}, \widetilde{\mathbf{n}}^{m}\cdot \bm{\omega}^h\r)_{\widetilde{\Gamma}^{m}}^h-\l(\p_s \widetilde{\mathbf{X}}^{m}, \p_s\bm{\omega}^h\r)_{\widetilde{\Gamma}^{m}}=0,\quad \forall\,\bm{\omega}^h\in  [V^h]^2.
		\end{cases}
	\end{equation}
Similar to \eqref{CSF:equi-BGN1}, it can be rigorously proved that the vertices of $\widetilde{\Gamma}^m$ are evenly distributed and the perimeter does not increase, i.e.,
\be\label{equid}
|\widetilde{\mathbf{h}}_{j}^{m}|=|\widetilde{\mathbf{h}}_{j-1}^{m}|, \quad j=1,\ldots, N;\qquad \widetilde{L}^m\le L^m.
\ee

Now, we present a summary of the entire algorithm for the equi-BGN2 scheme for solving the CSF. This scheme can be regarded as a variant of scheme \eqref{CSF:BGN2}, where $(\bX^{m-1},\kappa^{m-1})$ and $\Gamma^{m}$ are replaced by their mesh regularized approximations $(\widetilde{\bX}^{m-1}, \widetilde{\kappa}^{m-1})$ and $\widetilde{\Gamma}^{m}$, respectively.

\begin{algorithm}{(\textbf{equi-BGN2 scheme for CSF}})
\label{Full algorithm 2}

$\bm{Step~0.}$  Given the initial curve $\Gamma(0)$, and $N,T$, $\tau$, compute $\bX^0$ as in \emph{Step~0} in Algorithm \ref{CSF:BGN initial data 1}. Use equi-BGN1 scheme \eqref{tri:equi-BGN1} to obtain the equidistributed polygon $\widetilde{\bX}^0$ and $\widetilde{\kappa}^0$.

$\bm{Step~1.}$ Using $\widetilde{\bX}^0$ as the input, we obtain $(\widetilde{\bX}^1, \widetilde{\kappa}^1)$ by solving the equi-BGN1 scheme \eqref{CSF:equi-BGN1} with time step $\tau$. Set $m=1$. 

$\bm{Step~2.}$ Solve the BGN2 scheme \eqref{CSF:BGN2} with $(\widetilde{\bX}^{m-1}, \widetilde{\kappa}^{m-1})$ and $\widetilde{\Gamma}^m$ to obtain $(\bX^{m+1},\kappa^{m+1})$.

$\bm{Step~3.}$ Update $m=m+1$. Apply the equi-BGN1 scheme \eqref{tri:equi-BGN1} to obtain the mesh-regularized approximation  $\widetilde{\bX}^{m}$ and $\widetilde{\kappa}^{m}$.

$\bm{Step~4.}$ If $m< T/\tau$, then go back to $\textbf{Step~2}$; otherwise, stop the algorithm and output the data $\widetilde{\bX}^{m}$ as an approximation solution at time $t_m$.

\end{algorithm}

\smallskip

Indeed, the solution $\widetilde{\bX}^{m}$ not only equidistributes the vertices at each time level, but also is unconditionally stable in the following sense.

\begin{thm}[Interlaced energy stability]\label{CSF:Interlaced energy stability} Let $\widetilde{\Gamma}^{m}=\widetilde{\bX}^{m}(\mathbb{I})$ be the solution of Algorithm \ref{Full algorithm 2}. Then for any $\tau>0$ and $m\ge 1$, the energy stability holds
\begin{equation}\label{CSF:energy stability}
	\widetilde{L}^{m+1}\le \widetilde{L}^{m-1},
\end{equation}
where $\widetilde{L}^{m}:=\sum\limits_{j=1}^N |\widetilde{\mathbf{h}}^{m}_j|
$ is the perimeter of $\widetilde{\Gamma}^{m}$. In particular, we have
\begin{equation}\label{CSF:energy stability 2}
	\widetilde{L}^{m}\le L^0,\quad \forall\ m\ge 1,
\end{equation}
where $L^0$ is the perimeter of the initial polygon $\Gamma^0$.
\end{thm}

\begin{proof}
Recalling $\bm{Step~2}$, taking  $\bm{\omega}^h=\frac{\bX^{m+1}-\widetilde{\bX}^{m-1}}{2\tau}$ and $\varphi^h=\frac{\kappa^{m+1}+\widetilde{\kappa}^{m-1}}{2}$ in equation  \eqref{CSF:BGN2}, we get
	\begin{align}
		&\Big(  \frac{\kappa^{m+1}+\widetilde{\kappa}^{m-1}}{2},
\frac{\kappa^{m+1}+\widetilde{\kappa}^{m-1}}{2}\Big)_{\widetilde{\Gamma}^m}^h
\notag \\	&=-\Big(\frac{\mathbf{X}^{m+1}-\widetilde{\mathbf{X}}^{m-1}}{2\tau},\l(\frac{\kappa^{m+1}+\widetilde{\kappa}^{m-1}}{2}\r) \widetilde{\mathbf{n}}^m \Big)^h_{\widetilde{\Gamma}^m}\notag\\
		&=-\Big(\frac{\p_s \mathbf{X}^{m+1}+\p_s \widetilde{\mathbf{X}}^{m-1}}{2},\frac{\p_s \mathbf{X}^{m+1}-\p_s \widetilde{\mathbf{X}}^{m-1}}{2\tau}\Big)_{\widetilde{\Gamma}^m}\notag\\
		&=-\frac{1}{4\tau}\Big(\l(\p_s \mathbf{X}^{m+1},\p_s \mathbf{X}^{m+1}\r)_{\widetilde{\Gamma}^m}-\Big(\p_s \widetilde{\mathbf{X}}^{m-1},\p_s \widetilde{\mathbf{X}}^{m-1}\Big)_{\widetilde{\Gamma}^m} \Big).\label{CSF:ES proof 1}
	\end{align}
Noticing \eqref{equid}, we denote $|\widetilde{\mathbf{h}}^{m}|=\frac{\widetilde{L}^{m}}{N}$ by the length of each edge of the polygon $\widetilde{\Gamma}^m$, then, we have
	\begin{align*}
		\l(\p_s \mathbf{X}^{m+1},\p_s \mathbf{X}^{m+1}\r)_{\widetilde{\Gamma}^m}
		&=\sum_{j=1}^N\frac{|\mathbf{h}_j^{m+1}|}{|\widetilde{\mathbf{h}}^m|}\frac{|\mathbf{h}_j^{m+1}|}{|\widetilde{\mathbf{h}}^m|}|\widetilde{\mathbf{h}}^m|=\sum_{j=1}^N\frac{|\mathbf{h}_j^{m+1}|^2}{|\widetilde{\mathbf{h}}^m|},\\
			\l(\p_s \widetilde{\mathbf{X}}^{m-1},\p_s \widetilde{\mathbf{X}}^{m-1}\r)_{\widetilde{\Gamma}^m}	&=\sum_{j=1}^N\frac{|\widetilde{\mathbf{h}}^{m-1}|}{|\widetilde{\mathbf{h}}^m|}
\frac{|\widetilde{\mathbf{h}}^{m-1}|}{|\widetilde{\mathbf{h}}^m|}|\widetilde{\mathbf{h}}^m|=\sum_{j=1}^N\frac{|\widetilde{\mathbf{h}}^{m-1}|^2}{|\widetilde{\mathbf{h}}^m|}.
	\end{align*}
Therefore, by combining with the Cauchy-Schwarz inequality, we can estimate
\begin{align}
	&\Big(\p_s \mathbf{X}^{m+1},\p_s \mathbf{X}^{m+1}\Big)_{\widetilde{\Gamma}^m}-\Big(\p_s \widetilde{\mathbf{X}}^{m-1},\p_s \widetilde{\mathbf{X}}^{m-1}\Big)_{\widetilde{\Gamma}^m}\notag\\ &=\sum\limits_{j=1}^N\frac{|\mathbf{h}_j^{m+1}|^2}{|\widetilde{\mathbf{h}}^m|}-\sum_{j=1}^N
\frac{|\widetilde{\mathbf{h}}^{m-1}|^2}{|\widetilde{\mathbf{h}}^m|}=
\frac{N}{\widetilde{L}^m}\sum\limits_{j=1}^N|\mathbf{h}_j^{m+1}|^2-
\frac{(\widetilde{L}^{m-1})^2}{\widetilde{L}^m}\notag\\
&\ge \frac{N}{\widetilde{L}^m} \Big(\sum\limits_{j=1}^N |\mathbf{h}_j^{m+1}|\Big)^2/N-  \frac{(\widetilde{L}^{m-1})^2}{\widetilde{L}^m}=\frac{(L^{m+1})^2-(\widetilde{L}^{m-1})^2}{\widetilde{L}^m}\notag \\
	&\ge \frac{(\widetilde{L}^{m+1})^2-(\widetilde{L}^{m-1})^2}{\widetilde{L}^m},
\label{CSF:ES proof 2}
\end{align}
where for the last inequality we used \eqref{equid}. Combining \eqref{CSF:ES proof 1} and \eqref{CSF:ES proof 2}, we can deduce \eqref{CSF:energy stability}.

To show \eqref{CSF:energy stability 2}, it suffices to prove
$\widetilde{L}^0\le L^0$ and $\widetilde{L}^1\le L^0$. This can be easily obtained by recalling $\bm{Step~0}$, $\bm{Step~1}$, \eqref{equid} and \eqref{Lemma:property2}.
\end{proof}
\begin{remark}
	It is also feasible to perform the mesh regularization using the semi-implicit BGN1 scheme \eqref{CSF:BGN1} for the trivial flow in $\bm{Step~3}$ of Algorithm \ref{Full algorithm 2} at each time step. While this approach can reduce the global computational costs, achieving a theoretical proof of energy stability, as demonstrated in Theorem \ref{CSF:Interlaced energy stability}, seems unattainable.
\end{remark}

In subsequent sections, we will denote \eqref{CSF:BGN1} and \eqref{CSF:equi-BGN1} by the BGN1 and equi-BGN1 scheme, respectively. We call Algorithm \ref{Full algorithm} and Algorithm \ref{Full algorithm 2} as the BGN2 and equi-BGN2 scheme, respectively.

\section{Shape metric is a better choice}
\label{sec:Different error}

As we are aware, it is an interesting and thought-provoking problem to determine how to quantify the difference between two curves in 2D or two surfaces in 3D. Given two closed curves $\Gamma_1$ and $\Gamma_2$, we assume that the two curves are parametrized by $\bX(\rho)$ and $\bY(\rho)$, respectively, over the same interval $\mathbb{I}$. Consequently, we can define the following four metrics for measurement:
\begin{itemize}
 \item (\textbf{$L^2$-error}) ~The $L^2$-norm between the parametrized functions $\bX(\rho)$ and $\bY(\rho)$ is defined in a classical way
    \[
    A(\bX,\bY) :=\|\bX(\rho)-\bY(\rho)\|_{L^2(\mathbb{I})}.
    \]

 \item (\textbf{$L^\infty$-error})~The $L^\infty$-norm between the parametrized functions  $\bX(\rho)$ and $\bY(\rho)$ is defined as
    \[
    B(\bX,\bY) :=\|\bX(\rho)-\bY(\rho)\|_{L^\infty(\mathbb{I})}.
    \]

\item (\textbf{Manifold distance}) ~The manifold distance between the curves $\Gamma_1$ and $\Gamma_2$ is defined as \cite{Zhao-Jiang-Bao2021}
	\begin{align*}
	\mathrm{M}\l(\Gamma_1,\Gamma_2\r)
	&: = |(\Omega_1\setminus\Omega_2)\cup (\Omega_2\setminus\Omega_1) | =|\Omega_1 |+|\Omega_2 |-2 |\Omega_1\cap \Omega_2 |,
\end{align*}
where $\Omega_1$ and $\Omega_2$ represent the regions enclosed by $\Gamma_1$ and $\Gamma_2$, respectively, and $|\Omega|$ denotes the area of $\Omega$.

\item(\textbf{Hausdorff distance}) ~The Hausdorff distance between the curves $\Gamma_1$ and $\Gamma_2$ is defined as \cite{Bai2011}
\[
H(\Gamma_1,\Gamma_2) = \max\{\widetilde{H}(\Gamma_1,\Gamma_2),\widetilde{H}(\Gamma_2,\Gamma_1)\},
\]
where $\widetilde{H}(\Gamma_1,\Gamma_2) = \max\limits_{a\in \Gamma_1}\min\limits_{b\in \Gamma_2}d(a,b)$, and
$d$ is the Euclidean distance.
\end{itemize}
\smallskip
\begin{remark}
The $L^2$-error and $L^\infty$-error fall within the domain of {\it{function metrics}}, which rely on the parametrization of curves.
 On the other hand, as demonstrated in \cite[Proposition 5.1]{Zhao-Jiang-Bao2021} and \cite{Bai2011},  it has been easily proven that both manifold distance and Hausdorff distance fulfill the properties of  symmetry, positivity and the triangle inequality. Therefore, they belong to the category of {\it{shape metrics}} and not influenced by the specific parametrization.
\end{remark}

\smallskip
\begin{remark}
It should be noted that the aforementioned shape metrics can be easily calculated using simple algorithms. As the numerical solutions are represented as polygons, it is very easy to calculate the area of the symmetric difference region, i.e., the manifold distance, between two polygonal curves. Additionally, a polygon-based approach proposed in the literature~\cite{Bai2011} can be employed to calculate the Hausdorff distance between planar curves.
\end{remark}

In order to test the convergence rate of numerical schemes, for example, we consider the evolution of the CSF with an initial ellipse defined by
\[
    \{(x,y)\in \mathbb{R}^2: \quad x^2+4y^2=4\}.
\]
This initial ellipse is approximated using an equidistributed polygon $\bX^0$
 with $N$ vertices. Here, we simulate the CSF by using three different numerical schemes: Dziuk's scheme \cite[Section 6]{Dziuk1994},  BGN1 scheme  and BGN2 scheme. Since the exact solution of the CSF for an elliptical curve is unknown, we first compute a reference solution $\bX_{\mathrm{ref}}$ by Dziuk's scheme (to test the convergence of Dziuk's scheme) or the BGN2 scheme (to test the convergence of BGN-type schemes) with a fine mesh and a tiny time step size, e.g., $N=10000$ and $\tau=10^{-1}*2^{-11}$. To test the temporal error, we still take a large number of grid points, e.g., $N=10000$, such that the spatial error is ignorable. The numerical error and the corresponding convergence order are then determined as follows
 \begin{equation}
 \label{eqn:errordef1}
\cE_{\mathcal{M}}:=\cE_{\tau}(T)= \mathcal{M} (\bX^k_{\tau}, \bX_{\mathrm{ref}}),
\quad \text{Order}=\log\Big(\frac{\cE_{\tau}(T)}{\cE_{\tau/2}
(T)} \Big)\Big/ \log 2,
\end{equation}
where $k=T/\tau$, and $\mathcal{M}$ represents any one of the four metrics defined above.

Tables \ref{Tab:Different_norm_ellipse_Dziuk}-\ref{Tab:Different_norm_ellipse_BGN2} display the numerical errors at time $T=0.25$ measured by the four different metrics for Dziuk's scheme \cite{Dziuk1994}, the BGN1 scheme  and the BGN2 scheme, respectively. As anticipated, we easily observe linear convergence in time for Dziuk's scheme across all four different metrics. While linear and quadratic convergence for both shape metrics (i.e., the manifold distance and Hausdorff distance) are observed for the BGN1 scheme in Table \ref{Tab:Different_norm_ellipse_BGN1} and the BGN2 scheme in Table \ref{Tab:Different_norm_ellipse_BGN2}, respectively.

\begin{table}[h!]
\def\temptablewidth{0.95\textwidth}
\caption{Numerical errors quantified  by various metrics for Dziuk's scheme \cite[Section 6]{Dziuk1994}, with the  parameters $N=10000, \tau_0=1/40$, and $T=0.25$.}
{\rule{\temptablewidth}{1pt}}
\begin{tabular*}{\temptablewidth}{@{\extracolsep{\fill}}llllll}
\text{Errors}
 & $\tau=\tau_0$ & $\tau_0/2$ &$\tau_0/2^2$  &$\tau_0/2^3$  \\\hline
$ L^2$-norm  & 1.17E-2 & 6.31E-3 &3.26E-3 &1.62E-3  \\
    Order &--    &0.89 &0.95 &1.01  \\ \hline
$ L^{\infty}$-norm &  3.05E-2  &   1.63E-2  &   8.41E-3 & 4.19E-3  \\
    Order &--    &0.90 &0.96 &1.00  \\ \hline
\text{Manifold distance}   & 6.89E-2 &  3.65E-2 & 1.86E-2 &9.17E-3   \\
    Order  &--    &0.92 &0.97 &1.02 \\ \hline
\text{Hausdorff distance}   & 3.04E-2 &  1.62E-2 & 8.29E-3 &4.09E-3 \\
    Order  &--    &0.91 &0.97 &1.02
 \end{tabular*}
{\rule{\temptablewidth}{1pt}}
\label{Tab:Different_norm_ellipse_Dziuk}
\end{table}

\begin{table}[h!]
\def\temptablewidth{0.95\textwidth}
\caption{Numerical errors quantified  by various metrics for the BGN1 scheme, with the parameters $N=10000,\tau_0=1/40$, $T=0.25$.}
{\rule{\temptablewidth}{1pt}}
\begin{tabular*}{\temptablewidth}{@{\extracolsep{\fill}}llllll}
\text{Errors}
 & $\tau=\tau_0$ & $\tau_0/2$ &$\tau_0/2^2$  &$\tau_0/2^3$ \\\hline
$ L^2$-norm  & 4.25E-3 &3.98E-3 &4.05E-3 &4.15E-3  \\
    Order &--    &0.10 &$-$0.03 &$-$0.03  \\ \hline
$ L^{\infty}$-norm  &   1.00E-2  &   9.17E-3 & 9.47E-3 &9.79E-3  \\
    Order &--    &0.12 &$-$0.05 &$-$0.05  \\ \hline
\text{Manifold distance}  & 3.11E-2 &  1.58E-2 & 7.96E-3 &4.00E-3   \\
    Order  &--    &0.98 &0.99 &0.99 \\ \hline
\text{Hausdorff distance}   & 8.23E-3 &  4.18E-3 & 2.11E-3 &1.06E-3 \\
    Order  &--    &0.98 &0.99 &0.99
 \end{tabular*}
{\rule{\temptablewidth}{1pt}}
\label{Tab:Different_norm_ellipse_BGN1}
\end{table}

\begin{table}[h!]
\def\temptablewidth{0.95\textwidth}
\caption{Numerical errors quantified  by various metrics for the BGN2 scheme, with the parameters $N=10000,\tau_0=1/40$, $T=0.25$.}
{\rule{\temptablewidth}{1pt}}
\begin{tabular*}{\temptablewidth}{@{\extracolsep{\fill}}llllll}
\text{Errors}
 & $\tau=\tau_0$ & $\tau_0/2$ &$\tau_0/2^2$  &$\tau_0/2^3$  \\\hline
$ L^2$-norm  & 1.49E-2 &1.45E-2 &1.45E-2 &1.43E-2  \\
    Order &--    &0.04 &0.00 &0.02 \\ \hline
$ L^{\infty}$-norm   &   3.32E-2  &   3.30E-2 & 3.29E-2 &3.29E-2  \\
    Order &--     &0.01 &0.00 &0.00  \\ \hline
\text{Manifold distance}  & 8.44E-4 &  2.11E-4 & 5.27E-5 &1.32E-5   \\
    Order  &--    &2.00 &2.00 &1.99 \\ \hline
\text{Hausdorff distance}  & 2.00E-4 &  4.98E-5 & 1.26E-5 &3.29E-6 \\
    Order  &--    &2.01 &1.98 &1.94
 \end{tabular*}
{\rule{\temptablewidth}{1pt}}
\label{Tab:Different_norm_ellipse_BGN2}
\end{table}

It is worth noting that unlike Dziuk's scheme, the convergence of the BGN1 scheme and BGN2 scheme under function metrics (the $L^2$-norm and $L^\infty$-norm) is not as satisfactory. This is not surprising since the error in classical Sobolev space depends on the specific parametrization of the curve. In contrast, the BGN formulation \eqref{CSF:Coupled equation} allows tangential motion to make the mesh points equidistribute, which indeed affects the parametrization while preserving the shape of the curve. Thus it is not appropriate to use the classical function metrics to quantify the errors of the BGN-type schemes which are based on the BGN formulation.
Instead, as observed from Tables \ref{Tab:Different_norm_ellipse_BGN1} and \ref{Tab:Different_norm_ellipse_BGN2}, the shape metrics are much more suitable for quantifying the numerical errors of the schemes that allow intrinsic tangential velocity. In the remaining of the article, we will employ the manifold distance or the Hausdorff distance when measuring the difference between two curves.

\section{Applications to other geometric flows}

In this section, we extend the above proposed BGN2 scheme to other geometric flows.

\subsection{For area-preserving curve-shortening flow (AP-CSF)}
As is known, the AP-CSF can be viewed as the $L^2$-gradient flow with respect to the length functional under the constraint of total area preservation~\cite{BGN20,Jiang23}. Similar to \eqref{CSF:Coupled equation}, we rewrite the AP-CSF as the following coupled equations
\begin{equation}\label{AP-CSF:Coupled equation}
\begin{split}
	\p_t \mathbf{X}\cdot \mathbf{n} &=-\kappa+\l<\kappa \r> ,\\
			\kappa \mathbf{n}&=-\p_{ss}\mathbf{X},
\end{split}
\end{equation}
where the average of curvature is defined as  $\l<\kappa\r>:=\int_{\Gamma(t)}\kappa \d s /\int_{\Gamma(t)}1 \d s$.
The fully-discrete, first-order in time semi-implicit BGN scheme for AP-CSF reads as~\cite{BGN20}:

(\textbf{BGN1 scheme for AP-CSF}): For $m\ge 0$, find $\mathbf{X}^{m+1}\in [V^h]^2$ and $\kappa^{m+1}\in V^h$ such that
\begin{equation}\label{AP-CSF:BGN1}
	\begin{cases}
			\l(\frac{\mathbf{X}^{m+1}-\mathbf{X}^m}{\tau},\varphi^h \mathbf{n}^m \r)^h_{\Gamma^m}+\l(  \kappa^{m+1}-\l< \kappa^{m+1}\r>_{\Gamma^m} ,\varphi^h \r)_{\Gamma^m}^h=0,\\
	\l(\kappa^{m+1},\mathbf{n}^m\cdot \bm{\omega}^h\r)_{\Gamma^m}^h-\l(\p_s \mathbf{X}^{m+1},\p_s\bm{\omega}^h\r)_{\Gamma^m}=0,
		\end{cases}
\end{equation}
for all $(\varphi^h, \bm{\omega}^h)\in V^h\times [V^h]^2$,
where $\l<\kappa^{m+1}\r>_{\Gamma^m}:=\l(\kappa^{m+1},1 \r)_{\Gamma^m}^h/\l(1,1 \r)_{\Gamma^m}^h$.

Based on the same spirit, we can propose the following second-order BGN2 scheme.

(\textbf{BGN2 scheme for AP-CSF}):~ For $m\ge 1$, find $(\mathbf{X}^{m+1}, \kappa^{m+1})\in [V^h]^2\times V^h$ such that
\begin{equation}
\label{AP-CSF:BGN2}
\begin{cases}
\big(\frac{\mathbf{X}^{m+1}-\mathbf{X}^{m-1}}{2\tau},\varphi^h \mathbf{n}^m \big)^h_{\Gamma^m}=-\big(  \frac{\kappa^{m+1}+\kappa^{m-1}}{2}-\big<\frac{\kappa^{m+1}+\kappa^{m-1}}{2} \big>_{\Gamma^m},\varphi^h \big)_{\Gamma^m}^h,\\
\big(\frac{\kappa^{m+1}+\kappa^{m-1}}{2},\mathbf{n}^m\cdot \bm{\omega}^h\big)_{\Gamma^m}^h-\big(\frac{\p_s \mathbf{X}^{m+1}+\p_s \mathbf{X}^{m-1}}{2},\p_s\bm{\omega}^h\big)_{\Gamma^m}=0,
\end{cases}
\end{equation}
for all $(\varphi^h, \bm{\omega}^h)\in V^h\times [V^h]^2$.
Similarly, the stiffness matrix of the linear system to be solved in \eqref{AP-CSF:BGN2} is exactly the same as the BGN1 scheme \eqref{AP-CSF:BGN1}, whose well-posedness has been established in \cite[Theorem 90]{BGN20}. 
The equi-BGN1 scheme \cite{BGN20} and equi-BGN2 scheme can be derived in a similar manner. Similarly, unconditional interlaced energy stability for  the equi-BGN2 scheme can be obtained.





\subsection{For surface diffusion flow (SDF)}
We consider the fourth-order flow---SDF, which can be viewed as the $H^{-1}$-gradient flow with respect to the length functional~\cite{BGN20,Bao-Zhao}. In a similar fashion, we rephrase the SDF as the subsequent system of equations
\begin{equation}\label{SD:Coupled equation}
\begin{split}
	\p_t \mathbf{X}\cdot \mathbf{n} &=\p_{ss} \kappa ,\\
			\kappa \mathbf{n}&=-\p_{ss}\mathbf{X}.
\end{split}
\end{equation}
The fully discrete, first-order in time semi-implicit BGN scheme for SDF reads as~\cite{BGN07A}:

(\textbf{BGN1 scheme for SDF}):~For $m\ge 0$, find $\mathbf{X}^{m+1}\in [V^h]^2$ and $\kappa^{m+1}\in V^h$ such that
\begin{equation}\label{SD:BGN1}
	\begin{cases}
			\l(\frac{\mathbf{X}^{m+1}-\mathbf{X}^m}{\tau},\varphi^h \mathbf{n}^m \r)^h_{\Gamma^m}+\l( \p_s \kappa^{m+1}, \p_s\varphi^h \r)_{\Gamma^m}=0,\quad \forall\ \varphi^h\in V^h,\\
	\vspace{-3mm}		\l(\kappa^{m+1},\mathbf{n}^m\cdot \bm{\omega}^h\r)_{\Gamma^m}^h-\l(\p_s \mathbf{X}^{m+1},\p_s\bm{\omega}^h\r)_{\Gamma^m}=0,\quad \forall\ \bm{\omega}^h\in  [V^h]^2.
		\end{cases}
\end{equation}
In line with the same approach, we can put forward the subsequent second-order BGN2 scheme:

(\textbf{BGN2 scheme for SDF}):~ For $m\ge 1$,  find $(\mathbf{X}^{m+1}, \kappa^{m+1})\in [V^h]^2\times V^h$ such that
\begin{equation}\label{SD:BGN2}
\begin{cases}
\l(\frac{\mathbf{X}^{m+1}-\mathbf{X}^{m-1}}{2\tau},\varphi^h \mathbf{n}^m \r)^h_{\Gamma^m}+\l(  \frac{\p_s\kappa^{m+1}+\p_s\kappa^{m-1}}{2},\p_s\varphi^h \r)_{\Gamma^m}=0,\\
\vspace{-5mm}\\
\l(\frac{\kappa^{m+1}+\kappa^{m-1}}{2},\mathbf{n}^m\cdot \bm{\omega}^h\r)_{\Gamma^m}^h-\l(\frac{\p_s \mathbf{X}^{m+1}+\p_s \mathbf{X}^{m-1}}{2},\p_s\bm{\omega}^h\r)_{\Gamma^m}=0,
\end{cases}
\end{equation}
for all $(\varphi^h, \bm{\omega}^h)\in V^h\times [V^h]^2$.
The well-posedness 
of the above scheme can be shown similarly under certain mild conditions.

For the schemes \eqref{AP-CSF:BGN2} and \eqref{SD:BGN2}, we consistently set $\bX^0\in [V^h]^2$ as specified in Algorithm \ref{CSF:BGN initial data 1}, that is, $\bX^0$ is a parametrization of an (almost) equidistributed interpolation polygon with $N$ vertices for the initial curve $\Gamma(0)$. Similar as the case of CSF, to start the BGN2 schemes, we need to prepare the initial data $\kappa^0$ and $(\bX^1,\kappa^1)$, which can be achieved by using the similar approach as Algorithm \ref{CSF:BGN initial data 1} by using the corresponding BGN1 scheme. A complete second-order scheme can be obtained as in Algorithm \ref{Full algorithm} with the corresponding BGN1 scheme as a mesh regularization  when necessary.

\section{Numerical results}
\label{sec:illust}

\subsection{Convergence tests}
\label{sec:order 2, illu}

In this subsection, we test the temporal convergence of the second-order schemes \eqref{CSF:BGN2}, \eqref{AP-CSF:BGN2} and \eqref{SD:BGN2} for solving the three geometric flows: CSF, AP-CSF and SDF, respectively, with two different mesh regularization techniques. As previously discussed in
Section \ref{sec:Different error}, we quantify the numerical errors of the curves  using the shape metrics, such as the manifold distance
and Hausdorff distance. For the following simulations, we select four distinct types of  initial shapes:
\smallskip

\begin{itemize}
\item (\textbf{Shape 1}): a unit circle;
\item (\textbf{Shape 2}): an ellipse with semi-major axis $2$ and semi-minor axis $1$;
\item (\textbf{Shape 3}): a `tube' shape, which is a curve comprising a $4 \times 1$ rectangle with two semicircles on its left and right sides;
\item (\textbf{Shape 4}): a `flower' shape, which is parameterized by
\begin{equation*}
\bX(\rho)=((2+\cos(12\pi\rho))\cos(2\pi\rho),(2+\cos(12\pi\rho))\sin(2\pi\rho)),\quad \rho\in \mathbb{I}=[0,1].
\end{equation*}
\end{itemize}
We note that for the CSF with Shape 1 as its initial shape has the following true solution, i.e.,
\[\bX_{\mathrm{true}}(\rho,t)=\sqrt{1-2t}(\cos(2\pi\rho),\sin(2\pi\rho)),\quad \rho\in \mathbb{I},\quad t\in [0,0.5).\]
For this particular case, we compute the numerical error by comparing it with the true solution. However, for all other cases, we utilize the reference solutions which are obtained by the BGN2 scheme with large $N$ and a tiny time step size $\tau$.  In addition, the mesh regularization threshold for the BGN2 scheme is consistently set to $n_{\text{MR}}=10$, and the iteration tolerance of the equi-BGN2 scheme is set as $\mathrm{tol}=10^{-10}$.


\begin{figure}[h!]
\vspace{1mm}
\hspace{1mm}
\includegraphics[width=4.6in,height=2in]{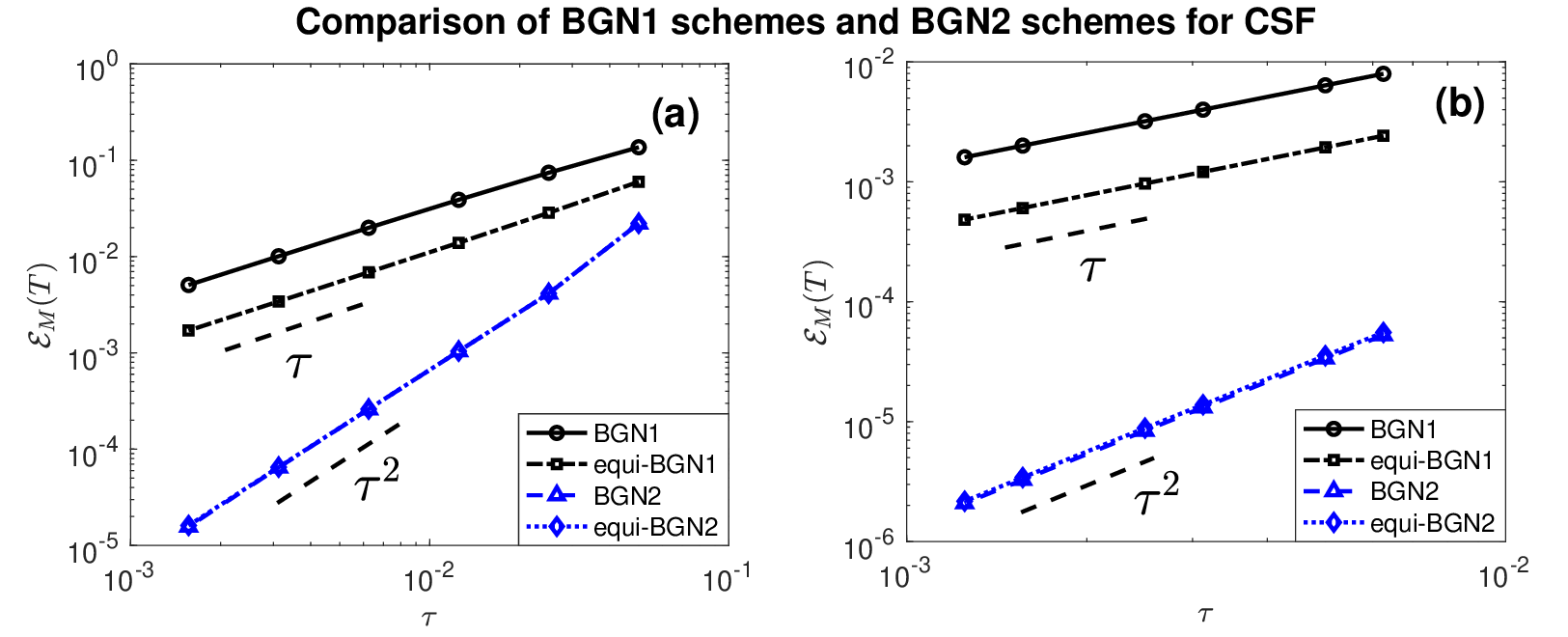}
\vspace{-4mm}
\caption{Log-log plot of the numerical errors at time $T=0.25$ measured by the manifold distance for BGN1, equi-BGN1, BGN2 and equi-BGN2 schemes for solving the CSF with  two different initial curves: (a) Shape 1 and (b) Shape 2, respectively, where the number of nodes is fixed as $N=10000$.}
\label{Fig:CSF_EOC1}
\end{figure}

\begin{figure}[h!]
\hspace{1mm}
\includegraphics[width=4.6in,height=2in]{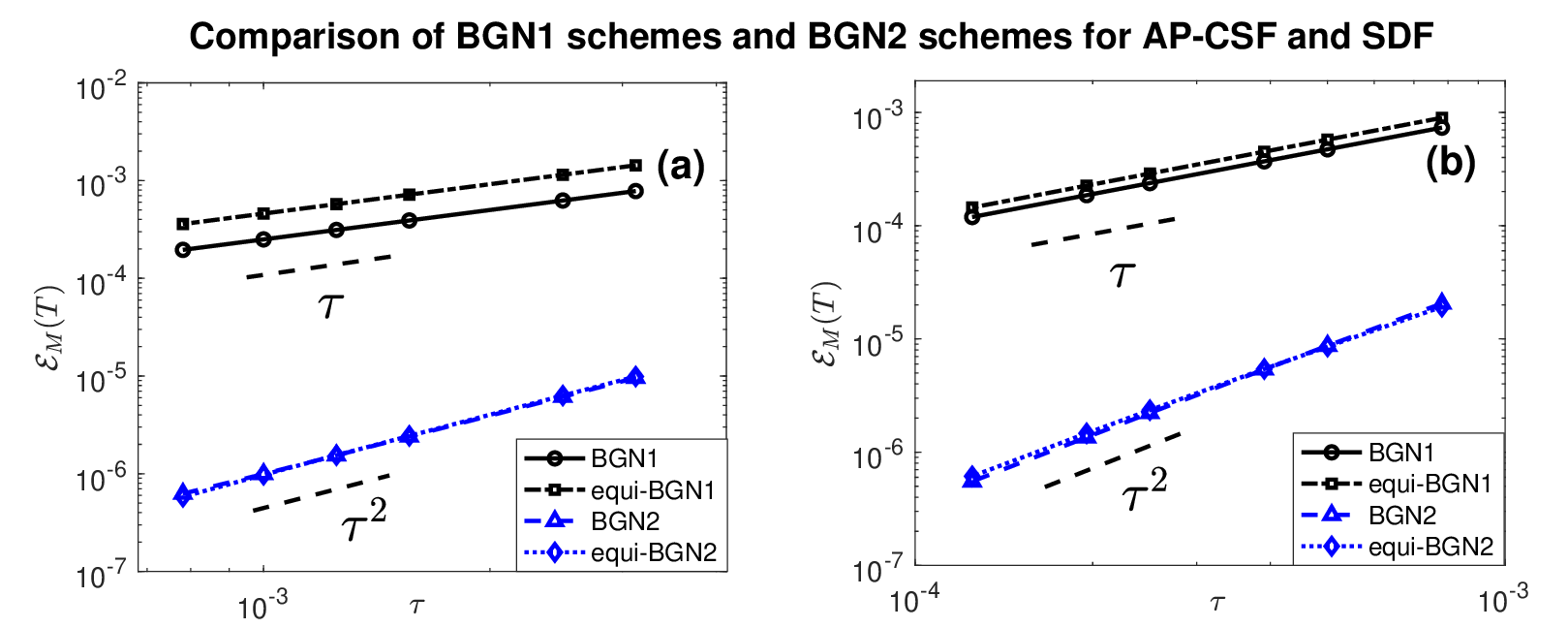}
\vspace{-4mm}
\caption{Log-log plot of the numerical errors at time $T=0.25$, measured by the manifold distance, for solving two different flows with Shape 2 as the initial curve: (a) AP-CSF and (b) SDF, respectively.}
\label{Fig:APCSFandSDF_EOC1}
\end{figure}

We begin our test by calculating the convergence of the BGN2 scheme and the equi-BGN2 scheme  for the CSF with either Shape 1 or Shape 2 as initial data. Fig.~\ref{Fig:CSF_EOC1} presents a log-log plot of the numerical errors at time $T=0.25$, measured by the manifold distance. The errors for the Hausdorff distance, which are similar, are not included here for brevity. To ensure a fair comparison, we also include the numerical results of the BGN1 scheme \eqref{CSF:BGN1} and the equi-BGN1 scheme \eqref{CSF:equi-BGN1} under the same computational parameters, with a fixed number of grid points $N=10000$. As clearly shown in Fig.~\ref{Fig:CSF_EOC1}, the numerical error of the BGN2-type schemes  reduce very rapidly with second-order accuracy in time, while the BGN1-type schemes only achieve first-order convergence.



Fig.~\ref{Fig:APCSFandSDF_EOC1} illustrates the temporal errors of the schemes for solving the AP-CSF and SDF with Shape 2 as initial data, showing quadratic convergence for BGN2-type schemes and linear convergence for BGN1-type schemes.


\subsection{Comparison of computational costs}
\label{sec:cost}

In order to show that the computational cost of the proposed BGN2 schemes is comparable to that of the BGN1 schemes, we present two examples about solving the CSF and SDF, respectively. The numerical codes were written by using  MATLAB 2021b, and they were implemented in a MacBook Pro with 1.4GHz quad-core Intel Core i5 and 8GB RAM.

Table~\ref{tab:CPU time} displays a comparison of CPU times in seconds and numerical errors at time $T=0.05$, as measured by the manifold distance $\cE_{M}(T)$ and Hausdorff distance $\cE_{H}(T)$, using the BGN1-type and BGN2-type schemes for solving the CSF, where the initial shape is chosen as Shape 1. Table~\ref{tab:CPU2} provides similar results for solving the SDF with Shape 3 as its initial shape. Based on the findings presented in Tables \ref{tab:CPU time} and \ref{tab:CPU2}, the following conclusions can be drawn.  (i) On the same mesh, the computational cost of the BGN2 scheme is slightly higher than that of the BGN1 scheme, as it involves additional calculations for the initial values and the right-hand side of the linear system at each time level. Meanwhile, the equi-BGN2 scheme incurs more or less similar computational cost as the equi-BGN1 scheme.
However, the numerical solutions obtained using the BGN2-type schemes are significantly more accurate than those of the BGN1-type schemes; (ii) The computational cost of the equi-BGN2 scheme is several times higher than that of the BGN2 scheme, since it needs to solve a nonlinear system at each time step. However, equidistribution and unconditional energy stability can be theoretically guaranteed for the equi-BGN2 scheme.
\begin{table}[h]
\centering
\caption{Comparisons of the CPU times (seconds) and the numerical errors measured from the manifold distance $\cE_{M}(T)$ and Hausdorff distance $\cE_{H}(T)$ for the BGN1-type and BGN2-type schemes applied to CSF},  where the initial shape is chosen as Shape 1, with $\tau=0.5/N$ and $T=0.05$.
\vspace{0.5cm}
\begin{tabular}{|c|c|c|c|c|c|c|c|}
\hline
\multicolumn{4}{|c|}{BGN1 scheme  } & \multicolumn{4}{c|}{BGN2 scheme }\\ \hline
	   $N$  & $\cE_{M}(T)$  & $\cE_{H}(T)$ &    {\text{Time}(s)}   & $N$ &  $\cE_{M}(T)$ & $\cE_{H}(T)$  &   {\text{Time}(s)}        \\ \hline
	 320  &  5.61E-4  &  1.25E-4  &  0.350   & 320  & 2.09E-4  & 5.04E-5  & 0.430     \\ \hline
       640   & 3.34E-4  & 6.37E-5 & 1.70 & 640  & 5.20E-5  & 1.27E-5  & 2.30      \\ \hline
	1280    & 1.81E-4  & 3.22E-5  & 9.85 & 1280  & 1.29E-5  & 3.20E-6  & 12.9      \\ \hline
      2560   & 9.38E-5  & 1.62E-5 & 110 & 2560  & 3.08E-6  & 8.16E-7  & 130
          \\ \hline\multicolumn{4}{|c|}{equi-BGN1 scheme } & \multicolumn{4}{c|}{equi-BGN2 scheme }\\ \hline
	   $N$  & $\cE_{M}(T)$  & $\cE_{H}(T)$ &    {\text{Time}(s)}   & $N$ &  $\cE_{M}(T)$ & $\cE_{H}(T)$  &   {\text{Time}(s)}        \\ \hline
	 320  &  4.70E-4   &  9.42E-5   &  1.16    & 320  & 2.09E-4  & 5.03E-5  & 0.82   \\ \hline
       640   &  1.82E-4  & 3.44E-5  & 4.93  & 640  & 5.20E-5  & 1.26E-5  & 4.19      \\ \hline
	1280    & 7.78E-5  & 1.40E-5  & 25.5  &  1280  & 1.29E-5  & 3.14E-6  & 25.4      \\ \hline
      2560   & 3.55E-5  & 6.22E-6  &  284 &
2560  & 3.08E-6  &  7.86E-7 & 304       \\ \hline
\end{tabular}
\label{tab:CPU time}
\end{table}

\begin{table}[h]
\centering
\caption{Comparisons of the CPU times (seconds) and the numerical errors measured by the manifold distance $\cE_{M}(T)$ and Hausdorff distance $\cE_{H}(T)$ using the BGN1-type and BGN2-type schemes applied to SDF}, where the initial shape is chosen as Shape 3, with $\tau=0.5/N$, and $T=0.05$.
\vspace{0.5cm}
\begin{tabular}{|c|c|c|c|c|c|c|c|}
\hline
\multicolumn{4}{|c|}{BGN1 scheme  } & \multicolumn{4}{c|}{BGN2 scheme }\\ \hline
	   $N$  & $\cE_{M}(T)$  & $\cE_{H}(T)$ &    {\text{Time}(s)}   & $N$ &  $\cE_{M}(T)$ & $\cE_{H}(T)$  &   {\text{Time}(s)}       \\ \hline
      320   & 4.73E-3  & 6.91E-4 & 0.470 &320  & 2.53E-3  & 1.14E-3  & 0.610      \\ \hline
	 640  & 2.24E-3  & 3.38E-4  & 2.03 & 640  & 8.28E-4  & 4.17E-4  & 2.27      \\ \hline
     1280   & 1.10E-3  & 1.67E-4 & 12.6 & 1280  & 2.30E-4  & 1.12E-4  & 15.1      \\ \hline
      2560   & 5.53E-4  & 8.34E-5 & 133 & 2560  & 5.42E-5  & 2.82E-5  & 153      \\\hline
       \multicolumn{4}{|c|}{equi-BGN1 scheme  } & \multicolumn{4}{c|}{equi-BGN2 scheme }\\ \hline
	   $N$  & $\cE_{M}(T)$  & $\cE_{H}(T)$ &    {\text{Time}(s)}   & $N$ &  $\cE_{M}(T)$ & $\cE_{H}(T)$  &   {\text{Time}(s)}        \\ \hline
	 320  &   5.00E-3  &  1.04E-3   &  3.39    & 320  & 2.71E-3  & 1.21E-3  & 3.48  \\ \hline
       640   & 2.62E-3   & 5.61E-4  & 17.1  & 640  &  8.88E-4 & 4.33E-4  & 16.7      \\ \hline
	1280    &  1.34E-3 & 2.93E-4  & 105  &  1280  & 2.64E-4  & 1.56E-4 &  102     \\ \hline
      2560   & 6.83E-4  & 1.51E-4 & 1151 &  2560  & 8.12E-5  & 5.57E-5  & 1140      \\ \hline
\end{tabular}
\label{tab:CPU2}
\end{table}

\subsection{Applications to the curve evolution}
\label{sec:long time, illu}

As is well-known, the AP-CSF and SDF possess some structure-preserving properties, such as the perimeter decreasing and area conserving properties~\cite{JiangLi21, Jiang23, Bao-Zhao}.
In this subsection, we investigate the structure-preserving properties of the proposed BGN2 scheme and equi-BGN2 scheme applied to AP-CSF and SDF, respectively.  As an example, we mainly focus on the SDF here. Moreover, we will discuss the importance of the mesh regularization procedures.

 Fig.~\ref{Fig:Ellipse_evo_and_Geo} (a) illustrates the evolution of an initially elliptic curve, referred to as Shape 2, driven by SDF towards its equilibrium state by the BGN2 scheme. Fig.~\ref{Fig:Ellipse_evo_and_Geo}(b)-(e) show the evolution of various geometric quantities during the process: the relative area loss $\Delta A(t)$, the normalized perimeter $L(t)/L(0)$, and the mesh distribution function $\Psi(t)$,
 which are defined respectively as
\[
\Delta A(t)|_{t=t_m}=\frac{A^m-A^0}{A^0},\quad \l.\frac{L(t)}{L(0)}\r|_{t=t_m}=\frac{L^m}{L^0},\quad \Psi(t)|_{t=t_m}=\frac{\max_j|\mathbf{h}^m_j| }{\min_j|\mathbf{h}^m_j|},
\]
for $m\ge 0$, where $A^m$ is the area enclosed by the polygon determined by $\bX^m$, $L^m$ represents the perimeter of the polygon, and $\Psi(t)$ is the mesh ratio function. As depicted in Fig.~\ref{Fig:Ellipse_evo_and_Geo}(b), the area loss exhibits a weakly oscillating behavior, which may result from the two-step structure of the BGN2 scheme, the equi-BGN2 scheme has similar oscillating behavior and we omit it here for brevity. It is worth noting that despite the oscillations, the normalized area loss remains very low, consistently  below $0.01\%$. By employing a smaller grid size, the area loss can be further reduced, and it is significantly lower than that of the BGN1 scheme under the same computational parameters. Furthermore, Fig.~\ref{Fig:Ellipse_evo_and_Geo}(c) shows the BGN2 scheme and the equi-BGN2 scheme preserve the perimeter-decreasing property of the SDF numerically. Furthermore, in Fig.~\ref{Fig:Ellipse_evo_and_Geo}(d), it can be observed that the mesh distribution function $\Psi(t)$ remains lower than $1.2$ during the evolution. This indicates that the mesh distribution remains well-maintained and almost equidistributed during the process. Therefore, in this scenario, there is no need to perform the mesh regularization procedure because $\Psi(t)$ is always smaller than the chosen threshold $n_{\text{MR}}$ (here we choose it as $10$) in the simulations. Additionally, Fig.~\ref{Fig:Ellipse_evo_and_Geo}(e) shows the equi-BGN2 scheme achieves  equidistribution property at each time step.  The relatively low iteration numbers do not compromise the computational efficiency.

\begin{figure}[h!]
\hspace{-2mm}
\includegraphics[width=4.8in,height=3.3in]{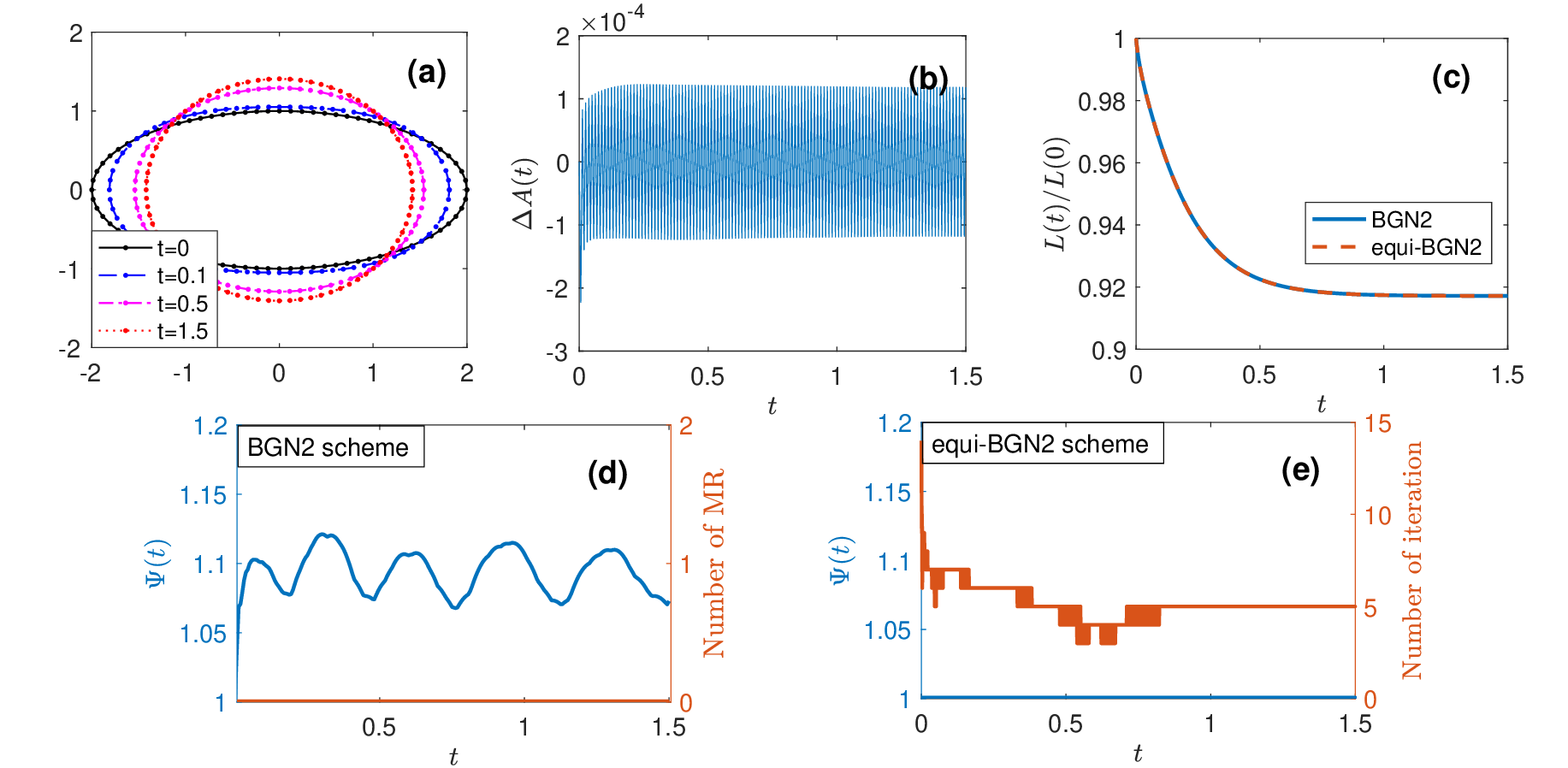}
\vspace{-9mm}
\caption{(a) Several snapshots of the curve evolution controlled by the SDF, starting with Shape 2 as its initial shape. (b) The relative area loss as a function of time. (c) The normalized perimeter as a function of time. (d) The mesh ratio function $\Psi(t)$ (in blue line) and the number of mesh regularizations (in red line) for the BGN2 scheme. (e) The mesh ratio function $\Psi(t)$ (in blue line) and the number of iteration numbers (in red line) at each time step for the equi-BGN2 scheme}. For (a)-(b), we used $N=80$ and $\tau=1/160$ while for (c)-(e), $N=640$ and $\tau=1/1280$.
\label{Fig:Ellipse_evo_and_Geo}
\end{figure}

To provide a more comprehensive comparison, we conduct simulations of evolution of Shape 3 curve driven by the SDF. Fig.~\ref{Fig:tube_evo_and_Geo}(b)-(c) demonstrates that the BGN2 scheme and the equi-BGN2 scheme effectively preserve two crucial geometric properties
of the SDF: the conservation of area and the reduction of perimeter properties~\cite{JiangLi21, Bao-Zhao}. It should be noted that
Fig.~\ref{Fig:tube_evo_and_Geo}(d) reveals that without the implementation of mesh regularization, the mesh distribution function $\Psi(t)$ can become very large.
Therefore, in our algorithm, when $\Psi(t)$ exceeds a threshold $n_{\text{MR}}$,
we employ the BGN1 scheme \eqref{SD:BGN1} for a single run to perform mesh regularization, similar to $\bm{Step~3}$  of Algorithm \ref{Full algorithm}. As clearly shown in Fig.~\ref{Fig:tube_evo_and_Geo}(d), following this step, the mesh ratio rapidly decreases to a low value, which makes the method more stable. Importantly, this mesh regularization procedure is only required four times throughout the entire evolution, without sacrificing the accuracy of the BGN2 scheme (cf. Table  \ref{tab:CPU2}). Similarly, as shown in Fig.~\ref{Fig:tube_evo_and_Geo}(e), the equi-BGN2 scheme also performs well for this initial shape. Compared to the case of Shape 2,  although we require more iteration steps, it is still superior to the BGN1 scheme in view of the accuracy and efficiency (cf. Table  \ref{tab:CPU2}).

\begin{figure}[h!]
\hspace{-2mm}
\includegraphics[width=4.8in,height=3.3in]{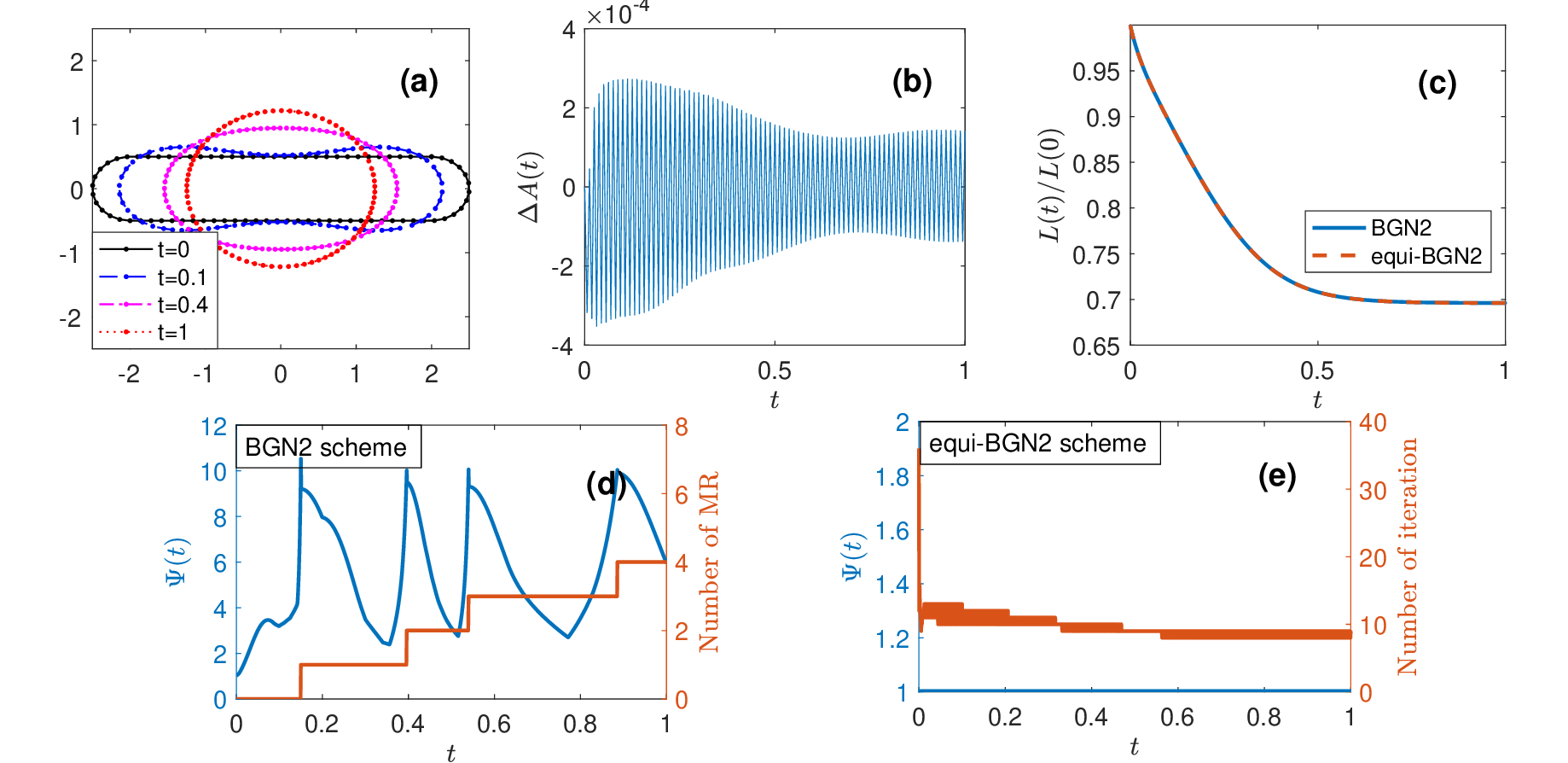}
\vspace{-9mm}
\caption{(a) Several snapshots of the curve evolution controlled by the
 SDF, starting with Shape 3 as its initial shape. (b) The relative area loss as a function of time. (c) The normalized perimeter as a function of time. (d) The mesh distribution function $\Psi(t)$ (in blue line) and the number of mesh regularizations (in red line) for the BGN2 scheme. (e) The mesh ratio function $\Psi(t)$ (in blue line) and the number of iteration numbers (in red line) at each time step for the equi-BGN2 scheme}. For (a)-(b) we used $N=80$ and $\tau=1/160$ while $N=640$ and $\tau=1/1280$ for (c)-(e).
\label{Fig:tube_evo_and_Geo}
\end{figure}

Next, we proceed to simulate the evolution of a nonconvex curve, referred to as Shape 4. Fig.~\ref{Fig:flower_geo_MRandnoMR} and Fig.~\ref{Fig:flower_geo_MRandnoMRb} (the first row) show the evolution of the geometric quantities based on two different initial data preparations: Algorithm \ref{CSF:BGN initial data 1} and Remark \ref{iniitalp}, respectively. A comparison of the results reveals the superiority of the latter approach for several reasons: (i) the magnitude of area loss is significantly lower when using the approach in Remark \ref{iniitalp}; (ii) the perimeter-decreasing property is preserved while the perimeter oscillates at the beginning when using Algorithm \ref{CSF:BGN initial data 1}; (iii) the number of mesh regularization implementations is smaller with the approach in Remark \ref{iniitalp}. Thus we recommend preparing the data for a nonconvex initial curve following the approach outlined in Remark \ref{iniitalp}. Fig. \ref{Fig:flower_geo_MRandnoMRb} (the second row) demonstrates the performance  of the equi-BGN2 scheme, from which it can be seen that only a relatively low number of iterations are needed for the majority of time steps (see \ref{Fig:flower_geo_MRandnoMRb}(c2)). Additionally, Fig. \ref{Fig:flower_geo_MRandnoMRb} (the third row) illustrates the evolution of the same quantities without any implementations of mesh regularization. In this case, all three quantities exhibit significant oscillations  after a certain time period, and the area loss and mesh ratio of the polygon becomes excessively large, resulting in the breakdown of the BGN2 scheme. Notably, mesh clustering has happened at $t=1$ (see Fig. \ref{Fig:flower_evo_MRandnoMR}(c3)), eventually  leading to mesh distortion at $t=2$ (see Fig. \ref{Fig:flower_evo_MRandnoMR}(d3)).

\begin{figure}[h]
\hspace{-10mm}
\includegraphics[width=5.3in,height=1.0in]{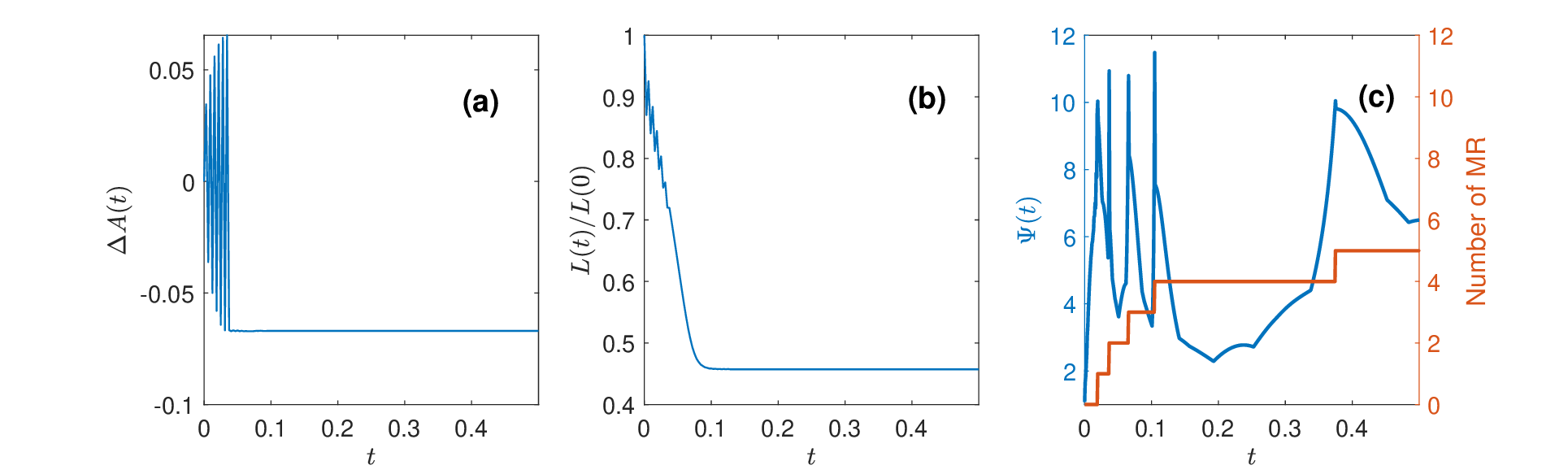}
\vspace{-10mm}
\caption{Evolution of the three geometrical quantities when the initial data is prepared as in Algorithm \ref{CSF:BGN initial data 1}: (a) the relative area loss, (b) the normalized perimeter, (c) the mesh distribution function $\Psi(t)$, for  the BGN2 scheme.}
\label{Fig:flower_geo_MRandnoMR}
\end{figure}

\vspace{-5mm}

\begin{figure}[h]
\hspace{-1mm}
\includegraphics[width=4.6in,height=3.0in]{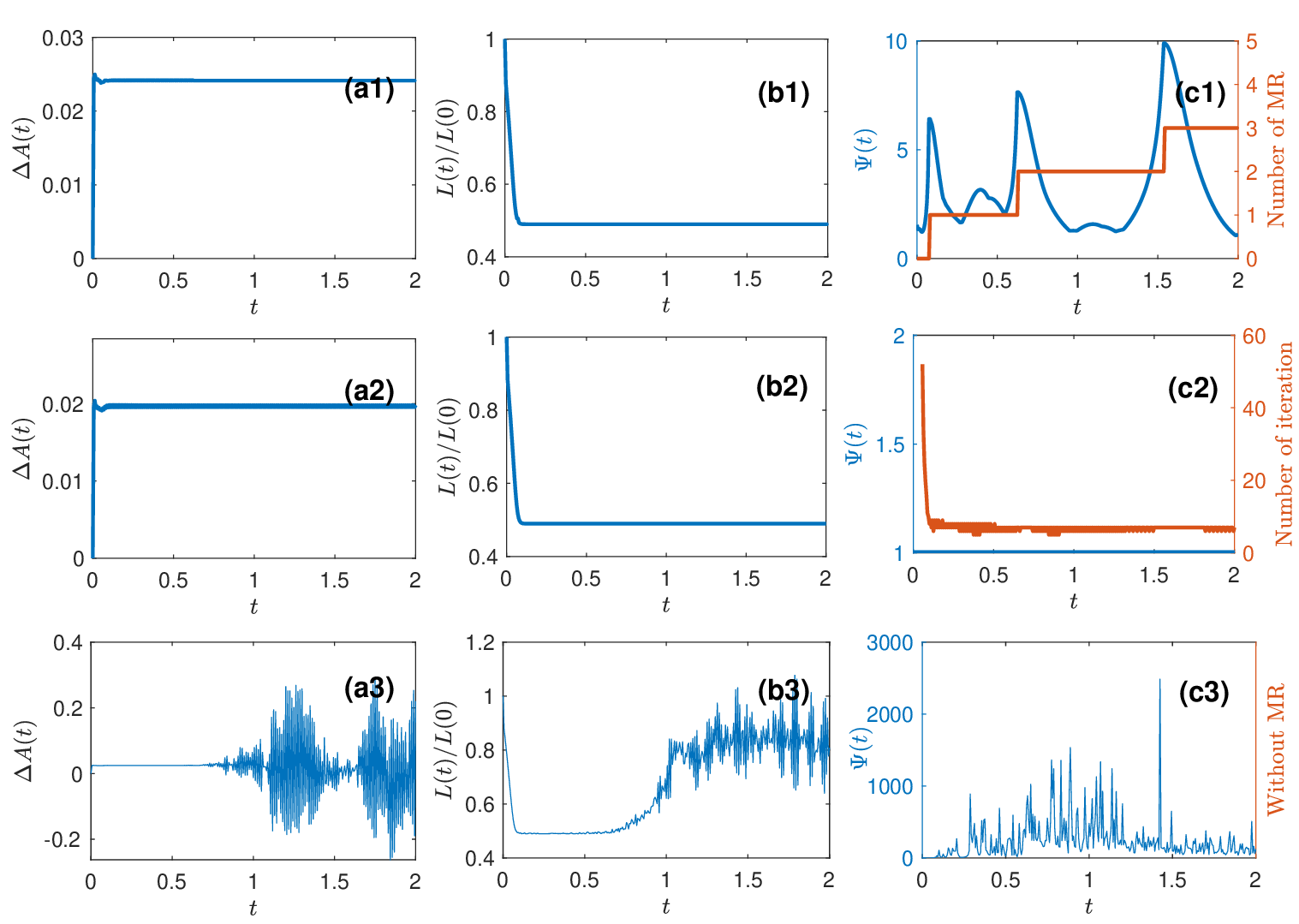}
\vspace{-3mm}
\caption{Evolution of the three geometrical quantities when the initial data is prepared as in Remark \ref{iniitalp}: (a) the relative  area loss, (b) the normalized perimeter, (c) the mesh distribution function $\Psi(t)$,  for the BGN2 scheme (shown in the first row), the equi-BGN2 scheme (shown in the second row) and without mesh regularization procedure (shown in the third row)}.
\label{Fig:flower_geo_MRandnoMRb}
\end{figure}

These issues can be avoided by implementing one of the mesh regularization techniques. Fig. \ref{Fig:flower_evo_MRandnoMR}(a1)-(d1) and  Fig. \ref{Fig:flower_evo_MRandnoMR}(a2)-(d2)  demonstrate that mesh regularization is crucial for the effectiveness of BGN2-type schemes and the BGN1-type schemes can significantly enhance mesh quality. Additionally, A comparison between Fig. \ref{Fig:flower_evo_MRandnoMR}(d1) and Fig. \ref{Fig:flower_evo_MRandnoMR}(d2) reveals that there still exists some mesh clustering for the BGN2 scheme in long-time evolution.  In contrast, the equi-BGN2 scheme exhibits equidistribution property throughout all time.

\begin{figure}[h]
\hspace{-3mm}
\centering
\includegraphics[width=4.5in,height=2.6in]{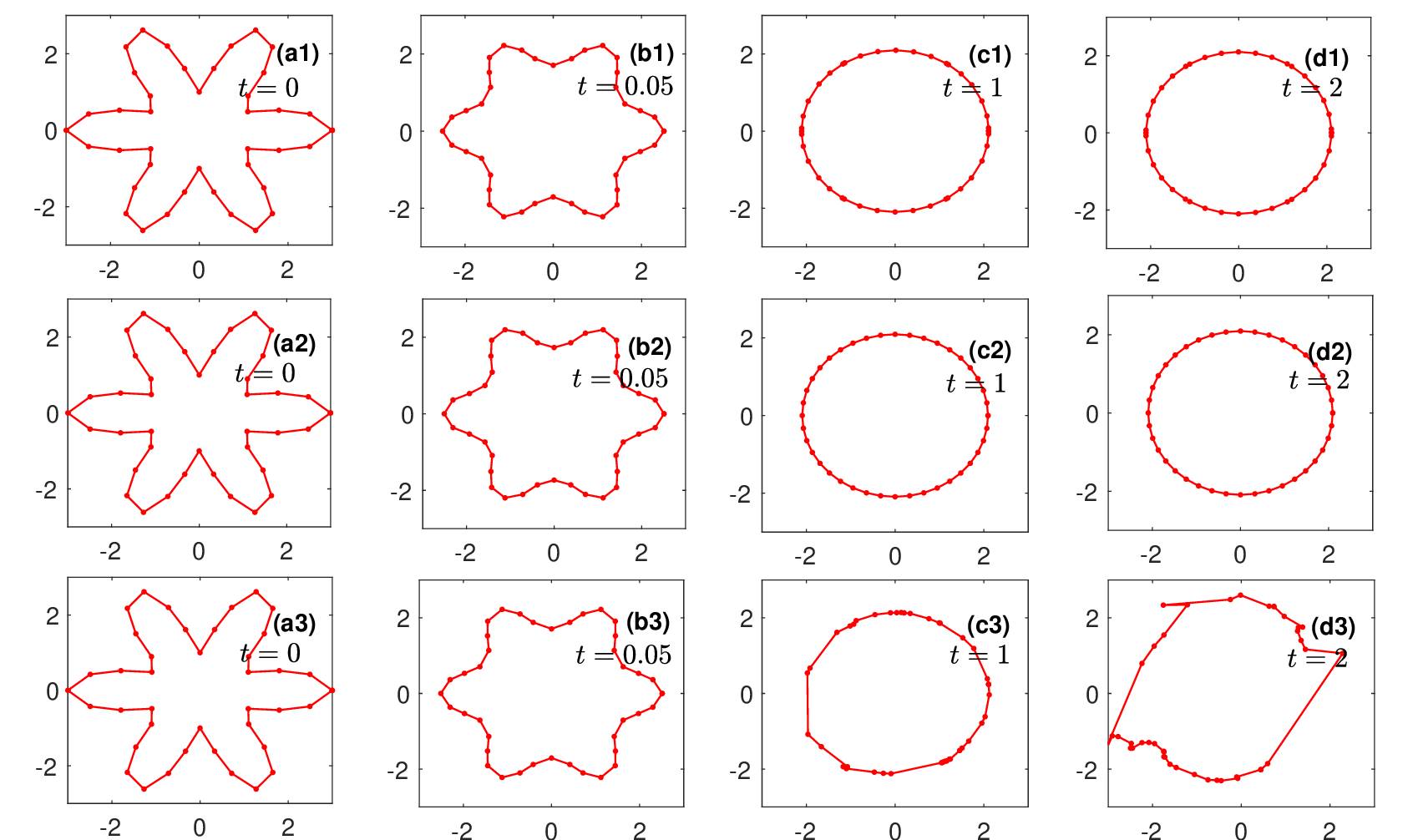}
\vspace{-1mm}
\caption{Evolution of the curve driven by SDF starting with Shape 4 as initial data by using the BGN2 scheme (shown in the first row), the equi-BGN2 scheme (shown in the second row) and without mesh regularization procedure (shown in the third row).} The simulations are conducted with a grid number of $N=40$ and a time step size $\tau=1/180$.
\label{Fig:flower_evo_MRandnoMR}
\end{figure}

Finally, we close this section by simulating the evolution of a nonconvex initial curve~\cite{Mikula-Sevcovic2004,Balazovjech-Mikula,Mackenzie-Nolan-Rowlatt-Insall}
driven by CSF, AP-CSF and SDF using the BGN2 schemes. The initial curve can be parametrized as
\[
\bX(\rho)=(
	 \cos(2\pi \rho),
	\sin(\cos(2\pi\rho ))+\sin(2\pi \rho)(0.7+\sin(2\pi \rho)\sin^2(6\pi \rho))),
\]
for $\rho\in \mathbb{I}=[0,1]$. The numerical results are depicted in Fig.~\ref{Fig:Mikula_evo}.
As shown in this figure, the CSF initially transforms the intricate curve into a circle before it disappear. Both the AP-CSF and SDF
drive the curve to evolve into a perfect circle as its equilibrium shape.

\begin{figure}[htp!]
\hspace{-7mm}
\includegraphics[width=5.3in,height=3in]{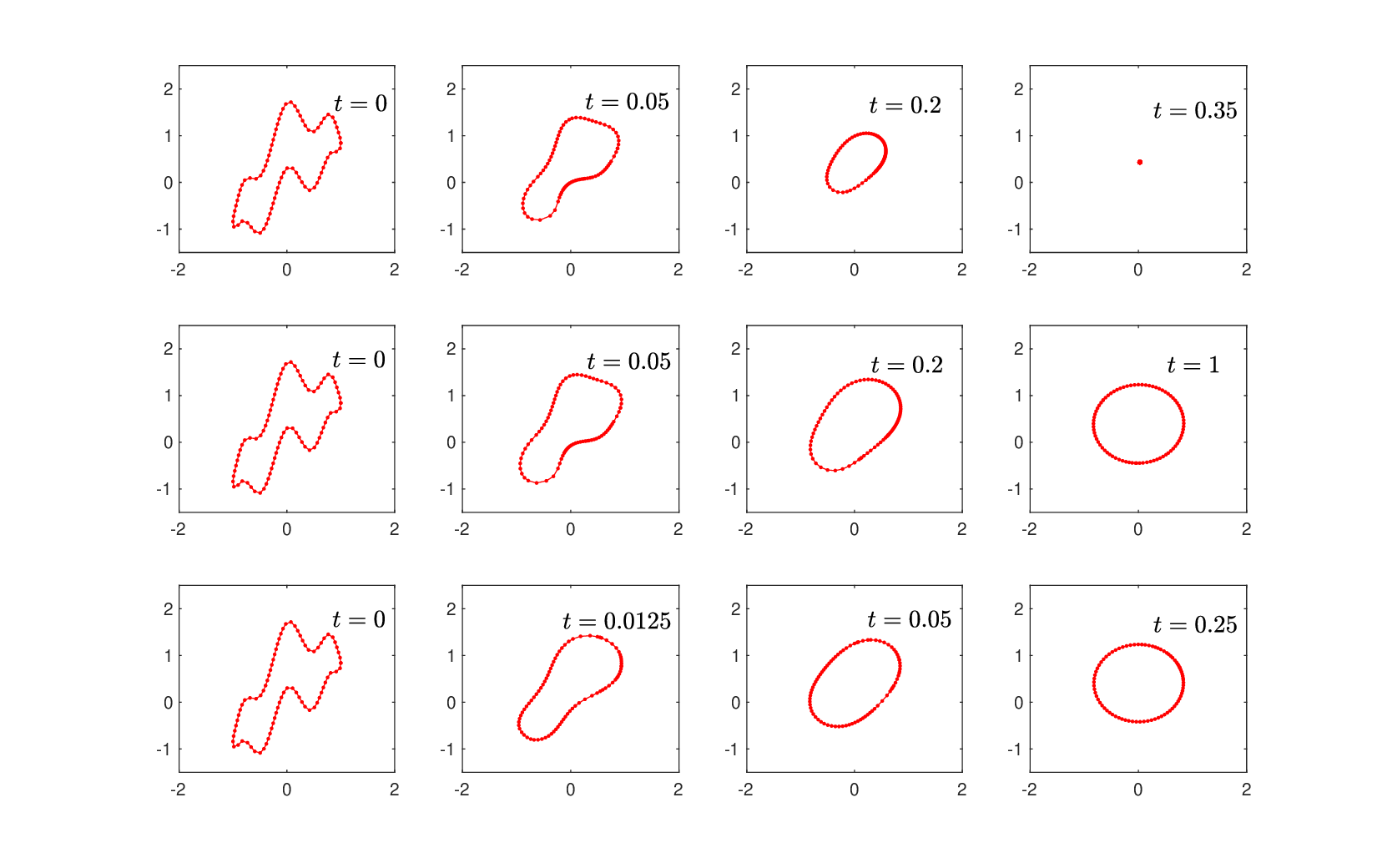}
\vspace{-13mm}
\caption{Snapshots of the curve evolution using the proposed BGN2 schemes for three distinct geometric flows: CSF (first row), AP-CSF (second row) and SDF (third row).  The simulations are conducted with $N=80$ and $\tau=1/640$.}
\label{Fig:Mikula_evo}
\end{figure}

%

\section{Conclusions}

We proposed two novel temporal second-order, BGN-based parametric finite element methods  (i.e., the BGN2 and the equi-BGN2 schemes) for solving different geometric flows of curves (e.g., CSF, AP-CSF and SDF). Based on the BGN formulation and the corresponding semi-discrete FEM approximation, our numerical methods employ a Crank-Nicolson leap-frog method to discretize in time. The key idea lies in choosing a discrete inner product over the curve $\Gamma^m$, such that the time level $t_m$ coincides with the time at which all quantities have approximations with an error of $\mathcal{O}(\tau^2)$. We established the well-posedness
of the BGN2 scheme under some suitable assumptions. Additionally, we showed that the equi-BGN2 scheme is unconditional energy-stable. We emphasized the use of shape metrics (manifold distance and Hausdorff distance) rather than function norms (e.g., $L^2$-norm, $L^{\infty}$-norm) to measure numerical errors of BGN-based schemes.
 In the case of certain initial curves, such as a `flower' shape, we found that the BGN2 scheme (resp. the equi-BGN2 scheme), in conjunction with the BGN1 scheme (resp. the equi-BGN1 scheme) for mesh regularization, exhibited remarkable  stability in practical simulations.
Extensive numerical experiments demonstrated that the proposed BGN2 and equi-BGN2 schemes achieve second-order accuracy in time, as measured by the shape metrics, outperforming the BGN1 scheme in terms of accuracy.

Furthermore, it is worth mentioning that the approach we have presented for constructing a temporal high-order BGN-based scheme can be readily extended to address various other problems, such as anisotropic geometric flows~\cite{Bao-Jiang-Li}, Willmore flow~\cite{BGN08C}, two-phase flow~\cite{Garcke23}, solid-state dewetting~\cite{Zhao-Jiang-Bao2021} and geometric flows in 3D~\cite{Zhao-Jiang-Bao}.

 In our future research, we will further investigate the development of structure-preserving temporal high-order BGN-based schemes~\cite{Bao-Zhao,JiangLi21} and conduct the numerical analysis of the BGN-based schemes with respect to the shape metric. These investigations will contribute to enhancing the overall understanding and applicability of the BGN type scheme in different contexts.

\section*{CRediT authorship contribution statement}
{\textbf{Wei Jiang}}: Conceptualization, Methodology, Supervision, Writing. {\textbf{Chunmei Su}}: Conceptualization, Methodology, Supervision, Writing. {\textbf{Ganghui Zhang}}: Methodology, Numerical experiments, Visualization and Writing.

\section*{Declaration of competing interest}
The authors declare that they have no known competing financial interests or personal relationships that could have appeared to influence the work reported in this paper.

\section*{Data availability}
No data was used for the research described in the article.

\section*{Acknowledgement}
We would like to thank the anonymous reviewers for their helpful comments and suggestions. This work was partially supported by the NSFC 12271414 and 11871384 (W. J.), the Natural Science Foundation of Hubei Province Grant No. 2022CFB245 (W. J.), and NSFC 12201342 (C. S. and G. Z.). The numerical calculations in this paper have been done on the supercomputing system in the Supercomputing Center of Wuhan University.

\section*{References}

\end{document}